 \documentclass[11pt]{article}
 \UseRawInputEncoding
 \usepackage{bbm}
\usepackage{amssymb, amsthm, amsmath, amscd}
\setlength{\topmargin}{-45pt} \setlength{\evensidemargin}{0cm}
\setlength{\oddsidemargin}{0cm} \setlength{\textheight}{23.7cm}
\setlength{\textwidth}{16cm}

\usepackage{hyperref}

\newtheorem{thm}{Theorem}[section]
\newtheorem{lem}[thm]{Lemma}
\newtheorem{prop}[thm]{Proposition}
\newtheorem{cor}[thm]{Corollary}
\newtheorem{NN}[thm]{}

\newtheorem{QE}[thm]{Question}
\theoremstyle{definition}\newtheorem{df}[thm]{Definition}
\theoremstyle{definition}\newtheorem{rem}[thm]{Remark}
\theoremstyle{definition}\newtheorem{exm}[thm]{Example}

\renewcommand{\phi}{\varphi}

\newcommand{\N}{\mathbb{N}}
\newcommand{\Z}{\mathbb{Z}}

\newcommand{\R}{\mathbb{R}}
\newcommand{\C}{\mathbb{C}}
\newcommand{\T}{\mathbb{T}}
\newcommand{\I}{{\mathbb{I}}}

\newcommand{\hm}{homomorphism}
\newcommand{\dt}{\delta}
\newcommand{\ep}{\epsilon}
\newcommand{\la}{\langle}
\newcommand{\ra}{\rangle}
\newcommand{\andeqn}{\,\,\,{\rm and}\,\,\,}
\newcommand{\rforal}{\,\,\,{\rm for\,\,\,all}\,\,\,}
\newcommand{\CA}{$C^*$-algebra}
\newcommand{\SCA}{$C^*$-subalgebra}

\newcommand{\af}{{\alpha}}
\newcommand{\bt}{{\beta}}
\newcommand{\dist}{{\rm dist}}

\newcommand{\D}{\mathbb D}
\newcommand{\beq}{\begin{eqnarray}}
\newcommand{\eneq}{\end{eqnarray}}
\newcommand{\tforal}{\,\,\,\text{for\,\,\,all}\,\,\,}
\newcommand{\tand}{\,\,\,\text{and}\,\,\,}

\newcommand{\Wlog}{Without loss of generality}
\newcommand{\diag}{{\rm diag}}

\newcommand{\td}{\tilde}

\title{Existence  of Approximately Macroscopically Unique  States}
\author{Huaxin Lin}

\date{}

\begin{document}

\maketitle

\begin{abstract}
Let $H$ be an infinite dimensional separable Hilbert space and 
$B(H)$ the \CA\, of bounded operators on $H.$ Suppose that
$T_1,T_2,..., T_n$ are self-adjoint operators in $B(H).$
We show that, if commutators  $[T_i, T_j]$ are sufficiently small in norm, then   ``Approximately Macroscopically 
Unique" states always exist for any values in a synthetic spectrum 
of the $n$-tuple of self-adjoint operators. 
This is achieved under the circumstance  for which 
the $n$-tuple  may not be approximated by commuting ones. This answers a question proposed by
David Mumford for measurements in quantum theory.  If commutators are not small
in norm but small modulo compact operators, then ``Approximate Macroscopic Uniqueness"
states also exist.  
\end{abstract}

\section{Introduction}
In quantum mechanics, macroscopic observables may be represented by bounded 
self-adjoint operators $T_1, T_2,...,T_n$  in a Hilbert space $H.$ 
Commutators $[T_iT_j,T_jT_i]$ are related  to the uncertainty principle in their measurements and 
small commutators indicate more precise measurements.  
Let $v\in H$ with $\|v\|=1$ which gives a vector state.  The expected value  when measuring  an observable given by  $T_j$
 is 
${\rm exp}_{T_j}(v)=\la T_jv, v\ra,$ $1\le j\le n,$ (see p.178 of \cite{Mf2}).
Most interesting vector states are eigenstates, the vector states given by eigenvectors. 
The joint expected value associated with $v$ is $({\rm exp}_{T_1}(v), {\rm exp}_{T_2}(v),...,{\rm exp}_{T_n}(v))\in \R^n.$
However, 
it is difficult to get states which are joint eigenstates.  Strictly speaking, one should not even expect 
to have any eigenvector states at all much less the joint eigenvectors.
 In fact, a self-adjoint operator may not have any eigenvalues,
in general. But every point in the spectrum of a self-adjoint operator is an approximate eigenvalue.

In his recent book \cite{Mf2}, David Mumford proposed to study ``near eigenvectors"  for some set of human observables
which are called ``Approximately Macroscopically Unique" states.  Mumford explained that these 
states describe a world recognizable to us with no maybe-dead/maybe-alive cats. 
Given a vector state $v\in H,$  variance and standard deviation of the measurement made 
by $T_j$ are defined by ($1\le j\le n$)
\beq
{\rm var}_{T_j}(v)=\la (T_j-\exp_{T_j}(v)I)^2 v, v\ra\andeqn\\
{\rm sd}_{T_j}(v)=\sqrt{{\rm var}_{T_j}(v)}=\|(T_j-{\rm exp}_{T_j}I)v\|.
\eneq
 
 The following definition was given by  Mumford (p. 179,  II. AMU states, Chapter 14 of \cite{Mf2}):
 \begin{df}\label{DAMU}
 Let $T_1, T_2,..., T_n$ be a given $n$-tuple of self-adjoint bounded operators.  Define
 \beq\label{DAMU-1}
 {\rm AMU}(\{T_j: 1\le j\le n; \sigma\})=\{ v\in H: \|v\|=1, \,\,\, {\rm sd}_{T_j}(v)<\sigma, \,\, 1\le j\le n\},
 \eneq
 where $\sigma>0$ is a given tolerance. 
 \end{df}
 Following Mumford, immediately one has the question:
 \begin{QE}\label{Qamu-1}
 Is the set in \eqref{DAMU-1} non-empty when the commutators $[T_iT_j-T_jT_i]$ are sufficiently small?
 \end{QE}

 As pointed out by Mumford that it is natural to assume that the commutators have small norm as 
 operators in the \CA\, $B(H)$ of all bounded linear operators on $H,$ a constraint on macroscopic 
 variables (see p. 118, Chapter 14 of \cite{Mf2}). 
   In the First International Congress of Basic Science held in 2023,  David Mumford
gave the opening  plenary lecture on Consciousness, robots and DNA (\cite{Mf}).  At the very end of this
magnificent  lecture,
he asked whether there are, nearby, within a small tolerance, an $n$-tuple of commuting self-adjoint 
operators so one may project them to a common eigenvector subspace.  
A version of the  question may be reformulated as follows:

\begin{QE}\label{QQ}
Let $\ep>0$ and $n\in \N$ be a positive integer.
When is  there a constant $\dt>0$ such that  the following statement holds?
 If  $H$ is  any  separable  Hilbert space and 
 $T_1, T_2,...,T_n\in B(H)$ are self-adjoint  with $\|T_j\|\le 1$ ($1\le j\le n$) such that
 \beq\label{mfp-1}
 \|T_jT_i-T_iT_j\|<\dt,\,\,\, 1\le i,j\le n,
 \eneq
there exist self-adjoint operators $S_1,S_2,...,S_n$ on $H$
such that
\beq\label{mfp-2}
S_jS_i=S_iS_j\andeqn \|S_i-T_i\|<\ep,\,\, \, 1\le i,j\le n.
\eneq

\end{QE}
 
When  $n=2$ and $H$  is any finite dimensional Hilbert space  (no bound on the dimension)
 the  same problem is known 
as von-Neumann-Kadison-Halmos problem for almost commuting self-adjoint matrices
(see \cite{Halmos1} and \cite{Halmos2}).  In that case, the answer is that such $\dt$ always exists
(independent of the dimension)
\cite{Linmatrices} (see also \cite{FR}  and  \cite{Has}).

In quantum mechanics, one often heard `` when commutators tend  to zero, or 
the quantity $\hbar\to 0,$ we recover the classical system". 
Indeed, if observables in the system are compact self-adjoint operators, by
the affirmative solution to von-Neumann-Kadison-Halmos problem  above (\cite{Linmatrices}), 
a pair of  observables can be approximated by commuting observables when  the norm of commutators
$[T_1, T_2]$  is small ($n=2$).  In general, when these observables are not compact,
as pointed out  by Mumford (see  also Example 4.6 of \cite{FR}) there is a  serious problem with this statement.
If in  classical system, observables are commuting, and in quantum systems, observables have non-zero (but 
small) commutators, then ``$\hbar\to 0,$ or commutators tend to zero" do not necessarily  recover 
the classical system. In fact, in general, no matter how small $\hbar$ or commutators are,
observables could be  far away from any commuting ones. 
There are topological obstacles  (the obvious one is Fredholm index and  there are hidden ones---see 
Proposition 5.5 and Proposition 6.4 in \cite{Linself}). 
It is shown in \cite{Linself} that when an approximate synthetic spectrum and essential 
synthetic spectrum of the $n$-tuple are close, the answer  to \ref{QQ} is affirmative. 
This result may be  interpreted as 
when one assumes that any ``local" measurement is not too far off 
from some ``outside" measurements, when commutators are sufficiently small, observables 
can be approximate by commuting ones (see Remark 6.6 of \cite{Linself}).

But, of course,  Mumford is right about the existence of AMU states.
Instead of trying to find a condition  on a
quantum system with small commutators which allows one to find near-by commuting observables, 
we may directly  answer Question \ref{Qamu-1}.
Theorem \ref{IT-1} below states that, 
when commutators are sufficiently small, there are indeed always some 
approximately macroscopically unique (pure) states  for multiple observables simultaneously.

\begin{thm}\label{IT-1}
Let $n\in \N$ and $\ep>0.$  There exists $\dt(n, \ep)>0$ satisfying the following:
Suppose that $H$ is an infinite dimensional separable Hilbert space and $T_1, T_2,...,T_n\in B(H)$
are self-adjoint operators with $\|T_j\|\le 1$ ($1\le j\le n$) such that
\beq\label{T1-c}
\|T_iT_j-T_jT_i\|<\dt,\,\,\,\, 1\le i,j\le n.
\eneq
Then, for any $\lambda=(\lambda_1, \lambda_2,...,\lambda_n)\in s{\rm Sp}^{\ep/4}((T_1,T_2,...,T_n)),$
there exists $v\in H$ with $\|v\|=1$ such that
\beq\label{T1-c-1}
&&\max_{1\le i\le n}|{\rm exp}_{T_i}(v)-\lambda_i|=\max_{1\le i\le n}|\la T_iv,v\ra-\lambda_i|<\ep\andeqn\\\label{T1-c-2}
&&\max_{1\le i\le n}\{{\rm sd}_{T_j}(v)\}=\max_{1\le i\le n}\|(T_i-{\rm exp}_{T_i}(v)\cdot I)v\|<\ep.
\eneq
\end{thm}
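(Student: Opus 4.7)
My plan is to reduce both conclusions of the theorem to a single intermediate step: producing a unit vector $v\in H$ that is an approximate joint eigenvector at scale $\ep/4$, meaning $\max_i \|(T_i - \lambda_i I)v\| < \ep/4$. Granting such a vector, the rest of the argument is elementary. The hypothesis $\lambda\in s{\rm Sp}^{\ep/4}((T_1,\ldots,T_n))$ is precisely what should supply this vector, either by the definition of the synthetic spectrum used in \cite{Linself} or as a quantitative consequence of it; the constant $\dt(n,\ep)$ is then chosen in terms of the machinery of \cite{Linself} so that under \eqref{T1-c} the synthetic spectrum at scale $\ep/4$ has the required extraction property.

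Given such a $v$, both conclusions follow by Cauchy--Schwarz and the triangle inequality. For \eqref{T1-c-1}, self-adjointness of $T_i - \lambda_i I$ gives
\[
|{\rm exp}_{T_i}(v) - \lambda_i| = |\la (T_i - \lambda_i I)v, v\ra| \le \|(T_i - \lambda_i I)v\| < \ep/4 < \ep.
\]
For \eqref{T1-c-2}, triangle inequality yields
\[
{\rm sd}_{T_i}(v) = \|(T_i - {\rm exp}_{T_i}(v) I)v\| \le \|(T_i - \lambda_i I)v\| + |\lambda_i - {\rm exp}_{T_i}(v)|\cdot \|v\| < \ep/4 + \ep/4 = \ep/2 < \ep.
\]
Thus the $\ep/4$ tolerance in the synthetic-spectrum hypothesis is exactly what is needed: it leaves a factor of two of slack to close the error budget in the standard-deviation estimate.

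The substantive content, and where I expect the main obstacle, is in extracting this near-joint-eigenvector from the abstract membership $\lambda\in s{\rm Sp}^{\ep/4}$. The topological obstructions to approximating $(T_1,\ldots,T_n)$ by a commuting tuple (recalled from \cite{Linself} in the introduction) mean one cannot simply produce $v$ as a joint spectral projection of a nearby commuting approximation; in general no such approximation exists. Instead the argument must work intrinsically with the almost-commuting tuple. This is where the small commutator bound \eqref{T1-c} enters decisively: it enables approximate joint functional calculus, in the style of Berg--Davidson--Voiculescu and its $n$-variable generalization developed in \cite{Linself}, with enough control to realize each point of the synthetic spectrum by an actual unit vector $v$. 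The delicate point is that this extraction must succeed at the prescribed scale $\ep/4$ uniformly in $H$, depending only on $n$ and $\ep$, even though the full approximation-by-commuting-tuples problem that motivates Question \ref{QQ} is genuinely obstructed.
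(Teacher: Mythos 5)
Your reduction is sound and exactly matches the elementary half of the paper's argument: once one has a unit vector $v$ with $\max_i \|(T_i - \lambda_i I)v\| < \ep/4$ (or with $\lambda$ replaced by a nearby approximation $\xi$ as the paper does), the Cauchy--Schwarz and triangle-inequality steps you give are precisely what the paper carries out, with the same $\ep/2 < \ep$ bookkeeping. So the organization is right.

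The gap is that the step you flag as ``the substantive content'' is not actually carried out, and the remark that the membership $\lambda\in s{\rm Sp}^{\ep/4}$ supplies the vector ``by the definition'' is not correct as stated. The definition only guarantees a grid point $x_j$ near $\lambda$ with $\|\Theta_{j,\ep/4}(T_1,\dots,T_n)\| \ge 1-\ep/4$, i.e.\ a unit vector $v$ with $\|\theta_{j,1}(T_1)\cdots\theta_{j,n}(T_n)v\|$ close to $1$. Because the $T_i$ do not commute, unwinding this product to control each $\|(T_i - x_{j,i})v\|$ only gives bounds on the order of $\sqrt{\ep}$ (the spectral mass of $v$ for $T_i$ escapes the $\ep/4$-window with probability $O(\ep)$, so the $L^2$ deviation is $O(\sqrt{\ep})$), which is not enough for the stated conclusion at scale $\ep$. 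That is precisely why the paper does not work directly from the definition but instead proves an approximate \emph{block-diagonalization}: Lemma \ref{Ldig} produces mutually orthogonal nonzero projections $p_1,\dots,p_m$ and an $\ep/4$-dense set $\{\xi_1,\dots,\xi_m\}$ in the near-spectrum $X$ with $\|\sum_k e_j(\xi_k)p_k + (1-p)\phi(e_j|_X)(1-p) - T_j\| < \ep/4$, after which any unit vector $v_k\in p_k H$ is an approximate joint eigenvector with $O(\ep)$ error. Lemma \ref{Ldig} is where the small-commutator hypothesis and the finer structure actually earn their keep: the proof is by contradiction, passing to the corona algebra $\prod A_i/\bigoplus A_i$, invoking Pedersen's double annihilator theorem (Lemma \ref{Lcorona}) and real rank zero of $B(H)$ to extract the nonzero projections, with Lemma \ref{LXX} controlling the limiting kernel. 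This is a materially different mechanism than ``approximate joint functional calculus'' in the BDV sense, and it is the part that your proposal leaves entirely unaddressed. In addition, the bridge from the synthetic spectrum $Z = s{\rm Sp}^{\ep/4}$ to the near-spectrum $X$ where Lemma \ref{Ldig} applies itself requires Propositions \ref{Pappsp} and \ref{Pspuniq} ($X\subset Z\subset X_{\eta/2}$), another nontrivial ingredient not visible in your outline. So: correct skeleton, correct elementary estimates, but the engine is missing and the stated shortcut around it does not close at the required scale.
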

Here   $s{\rm Sp}^\eta((T_1, T_2,..., T_n))$ is the set of $\eta$-synthetic-spectrum 
(see Definition  \ref{Dpseudosp}). 
This shows that in a macroscopic system, if one assumes a suitable commutator bound,
AMU states always exist.  Moreover,  
every point $\lambda=(\lambda_1, \lambda_2,...,\lambda_n)$ in $s{\rm Sp}^{\ep/4}((T_1,T_2,...,T_n))$
can be  approximated by a joint expected value for which AMU states exist. 

The universe is even more interesting.  We could have large commutators in norm but ``tend to zero" in a different sense.
Let us assume that $H$ is an infinite dimensional separable Hilbert space
with an orthonormal basis 
$\{u_k: k\in \N\}$ of $H.$ Let $p_m$ be the projection 
on ${\rm span}\{u_k: 1\le k\le m\}.$ 
Suppose that
\beq\label{compct-1}
\lim_{m\to\infty}\|(T_iT_j-T_jT_i)(1-p_m)\|=0,\,\,\, 1\le i,j\le n.
\eneq
In other words, the commutators are compact.

%
Denote by ${\cal K}$ the \SCA\, of all compact operators. 
Then we have the following:
\begin{thm}\label{IT2}
Let $H$ be an infinite dimensional  separable Hilbert space and 
$T_1, T_2,...,T_n\in B(H)$ be self-adjoint operators.
Suppose that
\beq\label{IT2-0}
T_iT_j-T_jT_i\in {\cal K},\,\,\, 1\le i,j\le  n.
\eneq
Then, for any $\lambda=(\lambda_1, \lambda_2,...,\lambda_n)\in {\rm Sp}(\pi_c(T_1), \pi_c(T_2), ...,\pi_c(T_n)),$
where $\pi_c: B(H)\to B(H)/{\cal K}$ is  the quotient map,
there exists  a sequence $v^{(m)}\in H$ with $\|v^{(m)}\|=1$ such that
\beq\label{IT2-1}
&&\lim_{m\to\infty}\max_{1\le i\le n}|\lambda-{\rm exp}_{Ti}(v^{(m)})|=0\tand\\\label{IT2-2}
&&\lim_{m\to\infty}\max_{1\le i\le n}{\rm sd}_{T_i}(v^{(m)})=\lim_{m\to\infty}\max_{1\le i\le n}\|(T_j-{\rm exp}_{T_i}\cdot I)(v^{(m)})\|=0.
\eneq
Moreover, for any $\zeta=(\zeta_1, \zeta_2,...,\zeta_n)\subset {\rm Conv}({\rm Sp}(T_1, T_2,...,T_n)),$
there are unit vectors $u^{(k)}\in H$ such that
\beq\label{IT2-3}
\lim_{k\to\infty}\max_{1\le j\le k}|\zeta_j-\la T_j u^{(k)}, u^{(k)}\ra\|=0.
\eneq
\end{thm}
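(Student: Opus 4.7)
The plan is to reduce both parts of the theorem to a Weyl-sequence construction for a single non-negative self-adjoint operator.

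For the first conclusion, introduce the auxiliary operator
\[
G := \sum_{i=1}^n (T_i - \lambda_i I)^2 \ge 0.
\]
Since $[T_i,T_j] \in {\cal K}$ by hypothesis, the images $\pi_c(T_i)$ pairwise commute in the Calkin algebra and generate a commutative $C^*$-subalgebra identified with $C(X)$, where $X = {\rm Sp}(\pi_c(T_1),\ldots,\pi_c(T_n))$. The assumption $\lambda \in X$ supplies a character $\chi_\lambda$ sending $\pi_c(T_i) \mapsto \lambda_i$, hence $\chi_\lambda(\pi_c(G)) = 0$. Thus $\pi_c(G)$ is not invertible and $G$ is not Fredholm; since $G \ge 0$, this forces $0$ to lie in the essential spectrum of $G$.

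By Weyl's criterion for self-adjoint operators there exist unit vectors $v^{(m)} \to 0$ weakly with $\|G v^{(m)}\| \to 0$. Because
\[
\sum_{i=1}^n \|(T_i - \lambda_i) v^{(m)}\|^2 = \la G v^{(m)}, v^{(m)}\ra \le \|G v^{(m)}\| \to 0,
\]
each $\|(T_i - \lambda_i) v^{(m)}\| \to 0$. Cauchy--Schwarz then yields $|{\rm exp}_{T_i}(v^{(m)}) - \lambda_i| \to 0$, establishing \eqref{IT2-1}, and the triangle inequality applied to ${\rm sd}_{T_i}(v^{(m)}) \le \|(T_i - \lambda_i) v^{(m)}\| + |\lambda_i - {\rm exp}_{T_i}(v^{(m)})|$ gives \eqref{IT2-2}.

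For the moreover part, write $\zeta = \sum_{l=1}^N c_l \lambda^{(l)}$ by Carath\'eodory with $\lambda^{(l)} \in {\rm Sp}(T_1,\ldots,T_n)$, $c_l \ge 0$, $\sum_l c_l = 1$. For each $l$ produce unit vectors $w^{(l)}_m$ with $\|(T_i - \lambda^{(l)}_i) w^{(l)}_m\| \to 0$---supplied by the first part when $\lambda^{(l)}$ lies in the essential joint spectrum, and by an actual joint eigenvector when $\lambda^{(l)}$ is an isolated joint eigenvalue. Along a common subsequence $m = m_k$, the vectors $\{w^{(l)}_{m_k}\}_l$ can be arranged to be asymptotically mutually orthogonal (weak nullity handles the essential case; distinct joint eigenspaces are orthogonal in the pure-point case). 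Then $u^{(k)}$, defined as the normalized $\sum_l \sqrt{c_l}\, w^{(l)}_{m_k}$, satisfies
\[
\la T_i u^{(k)}, u^{(k)}\ra = \sum_l c_l \la T_i w^{(l)}_{m_k}, w^{(l)}_{m_k}\ra + \sum_{l \ne l'} \sqrt{c_l c_{l'}} \la T_i w^{(l)}_{m_k}, w^{(l')}_{m_k}\ra,
\]
whose diagonal part converges to $\sum_l c_l \lambda^{(l)}_i = \zeta_i$, while each off-diagonal term, rewritten as $\lambda^{(l)}_i \la w^{(l)}_{m_k}, w^{(l')}_{m_k}\ra + \la (T_i - \lambda^{(l)}_i) w^{(l)}_{m_k}, w^{(l')}_{m_k}\ra$, tends to $0$.

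The chief obstacle is arranging this asymptotic orthogonality across the $\lambda^{(l)}$'s in the moreover part: it is immediate when all $\lambda^{(l)}$ sit in the essential joint spectrum (weak nullity of Weyl sequences suffices), but demands a short case split separating pure-point and essential contributions when some $\lambda^{(l)}$ is an isolated joint eigenvalue of the non-commuting tuple.
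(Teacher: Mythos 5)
Your argument is correct, but it is a genuinely different and considerably more elementary route than the one the paper takes. The paper obtains Theorem \ref{IT2} as a corollary of the quantitative Theorem \ref{TC+ep}: there one first replaces $(\pi_c(T_1),\dots,\pi_c(T_n))$ by an exactly commuting tuple in the Calkin algebra via Corollary 1.20 of \cite{Linalm97} (the Calkin algebra being purely infinite simple), then uses the corona-algebra Lemma \ref{Lcorona} (Pedersen's double annihilator theorem plus real rank zero) to split off mutually orthogonal projections $d_1,\dots,d_m$ on which the resulting $C(X)$-homomorphism acts by point evaluations at an $\ep$-dense set, lifts these projections, and extracts unit vectors with approximate identities. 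Your proof bypasses all of this: the character $\chi_\lambda$ on the commutative subalgebra generated by the $\pi_c(T_i)$ kills $\pi_c(G)$ for $G=\sum_i(T_i-\lambda_i)^2\ge 0$, spectral permanence for the self-adjoint element $\pi_c(G)$ puts $0$ in ${\rm sp}_{ess}(G)$, and Weyl's criterion does the rest; the Cauchy--Schwarz and triangle-inequality steps are exactly right. What the paper's heavier route buys is the uniform, quantitative statement of Theorem \ref{TC+ep} (commutators merely small in norm modulo ${\cal K}$, where no character exists and your argument has no starting point), of which Theorem \ref{IT2} is the limiting case; your route buys a short self-contained proof of Theorem \ref{IT2} itself, including the convex-combination part via the standard weak-nullity diagonal argument, which correctly replaces the paper's exactly orthogonal projections by asymptotically orthogonal Weyl vectors. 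One caveat on your ``moreover'' part: for a non-commuting tuple the set ${\rm Sp}(T_1,\dots,T_n)$ is not defined in the paper (Definition \ref{Dsp1} requires commutativity), and the paper's own deduction from Theorem \ref{TC+ep} shows the intended reading is the essential joint spectrum ${\rm Sp}(\pi_c(T_1),\dots,\pi_c(T_n))$; under that reading every $\lambda^{(l)}$ falls into your essential-spectrum branch, and the ``isolated joint eigenvalue'' case --- which as stated is unjustified, since a non-commuting tuple need not possess any joint eigenvector --- should simply be deleted rather than argued.
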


Here ${\rm Sp}(\pi_c(T_1), \pi_c(T), ...,\pi_C(T_n))$ is  the essential joint spectrum of the $n$-tuple 
self-adjoint operators  (see Definition \ref{DSP}).  Note that any $\eta$-essential synthetic spectrum 
contains (see Definition \ref{Dpseudosp}  and Proposition \ref{Pspuniq}) ${\rm Sp}(\pi_c(T_1), \pi_c(T), ...,\pi_C(T_n)).$  
These are expected values 
of pure quantum states of $B(H),$ or $C,$ the \SCA\, generated by $T_1, T_2,...,T_n, 1$ vanishing on ${\cal K}.$
These may be called essential  joint expected values. 

The condition \eqref{IT2-0} means that the commutators tend to zero along any orthonormal basis 
of $H$ in the sense of \eqref{compct-1}.  It may be interpreted that any measurement with less local 
(finite dimensional) interference 
increases its accuracy.   Another way to interpret  this  is  the commutators (or $\hbar$) vanishes at the far edge of 
the (non-commutative) universe, or, in operator algebra's term, it vanishes in the corona algebra. 
 With this condition on the macroscopic observables, 
Theorem \ref{IT2} states that, any essential joint expected value can  be measured 
(simultaneously) by 
an AMU state.  It  should be noted  (see (4) of Remark \ref{catdiscussion})  that the condition \eqref{IT2-0} 
does {\it not} imply that the system is a compact perturbation of classical ones.

In section 2, apart from some notations, we provide some review on approximate synthetic spectrum 
for $n$-tuples of self-adjoint operators. Section 3 presents the proof of Theorem \ref{IT-1} and 
some preparation for section 4. In section 4, we first prove Theorem \ref{TC+ep} which goes beyond 
both Theorem \ref{IT-1} and Theorem \ref{IT2}. We also include  a final remark. 

\section{Synthetic spectra}

\begin{df}\label{D1}
Let $A$ be a \CA. Denote by $A_{s.a.}$ the set of self-adjoint elements in $A.$

If $x, y\in A$ and $\ep>0,$ we write 
\beq
x\approx_\ep y,\,\,\, {\rm if}\,\,\, \|x-y\|<\ep.
\eneq

\end{df}

\begin{df}\label{DHd}
Let $\Omega$ be a metric space.
Denote by $C(\Omega)$ the \CA\,  of  all (complex valued) continuous functions on $\Omega.$

Suppose that $x\in \Omega$ and $r>0.$
Define  $B(x, r)=\{y\in \Omega: {\rm dist}(y,x)<r\}.$
For any subset $Y\subset \Omega,$ $\overline{Y}$ is the closure of $Y.$

If $X\subset \Omega$ is a compact subset and $\eta>0,$  denote by
$X_\eta=\{y\in \Omega: {\rm dist}(y,X)<\eta\}.$ 

Recall that the Hausdorff distance of two compact subsets of $X, Y\subset \Omega$ 
is defined by
\beq
d_H(X, Y)=\max\{\sup_{x\in X} \{{\rm dist}(x, Y)\}, \sup_{y\in Y}\{{\rm dist}(y, X)\}\}.
\eneq
Let $F(\Omega)$ be the set of all non-empty compact subsets of $M.$ 
Then $(F(\Omega), d_H)$ is a compact metric space with the metric $d_H.$

\end{df}

\begin{df}\label{DSP}
Let $e_0(\xi)=1$ for all $\xi\in \R^n$ be the constant function,
$e_i$ be a continuous function defined on $\R^n$ by
$e_i((r_1, r_2,...,r_n))=r_i$ for $(r_1, r_2,...,r_n)\in \R^n,$ $i=1,2,...,n.$
{\bf This notation will be used throughout this paper.}

Let $\I^n=\{(r_1, r_2, ...,r_n)\in \R^n: |r_i|\le 1\}.$ 
Note that $C(\I^n)$ is generated by $\{e_i|_{\I^n}: 0\le i\le 1\}.$


\end{df}

\begin{df}\label{Dsp1}
Let $A$ be a unital \CA\, and $x\in A.$ Denote by ${\rm sp}(x)$ the spectrum of $x.$
Suppose that $T_1,T_2,...,T_n\in A_{s.a.}$ and 
$T_iT_j=T_jT_i,$ $1\le i, j\le n.$ 
Let $C$ be the unital \SCA\, generated by $1, T_1,T_2,..., T_n.$
Put $T_0=1.$
Then there exists a compact subset $\Omega\subset \R^n$ 
and an isomorphism $\phi: C(\Omega)\to C$ such that
$\phi(e_j)=T_j,$ where $e_j$ is defined above.
In other words, $\{T_j: 0\le j\le n\}$ generates a \SCA\, $C\cong C(\Omega).$

The set  $\Omega$ is called the spectrum of the $n$-tuple self-adjoint operators
$T_1, T_2,...,T_n.$ We will write 
${\rm Sp}((T_1, T_2,...,T_n))=\Omega.$
\end{df}

\begin{df}\label{Dpic}
Let $H$ be an infinite dimensional separable Hilbert space.
For $x, y\in H,$ denote by $\la x, y\ra$ the inner product of $x$ and $y.$
Denote by $B(H)$ the \CA\, of all bounded linear operators on $H$ 
and by ${\cal K}$ the \CA\, of all compact operators on $H.$
Denote by $\pi_c: B(H)\to B(H)/{\cal K}$ the quotient map. 

Let $T\in B(H).$ Recall that the essential spectrum of $T,$ ${\rm sp}_{ess}(T)={\rm sp}(\pi_c(T)),$
is the spectrum of $\pi_c(T).$
\end{df}

\begin{df}\label{n-net}
Fix an integer $k\in \N$ and $M\ge 1.$   Define
%
$$P_k^M=\{\xi=(x_1,x_2,...,x_n): x_j=m_j/k,  |m_j|\le Mk, m_j\in \Z,\,\,1\le j\le n\}.$$
$P_k^M$ has only finitely many points.
\end{df}

\begin{df}\label{Dpseudosp}
Let $n\in \N$ and $M>0.$
In what follows,  for each $0<\eta<1,$ we  choose a fixed integer $k=k(\eta)\in \N$
such that $k=\inf\{l\in \N: (M+1)/l<{\eta\over{2\sqrt{n}}}\}.$ Denote $D^\eta=P_k^M.$
We write $D^\eta=\{x_1,x_2,...,x_m\}.$ Then $D^\eta$ is $\eta/2$-dense  in $\I^n.$
Moreover, $D^\eta\subset D^\dt$ if $0<\dt<\eta.$

Denote by $M^n=\{(r_1, r_2,...,r_n): |r_i|\le M\}.$

Fix $\xi=(\lambda_1, \lambda_2,...,\lambda_n)\in M^n.$
Let 
$\theta_{\lambda_i,\eta}\in C([-M, M])$ be such that 
$0\le \theta_{\lambda_i, \eta}\le 1,$ $\theta_{\lambda_i, \eta}(t)=1,$ if $|t-\lambda_i|\le 3\eta/4,$ 
$\theta_{\lambda_i,\eta}(t)=0$
if $|t-\lambda_i|\ge  \eta,$  and $\theta_{\lambda_i \eta}$ is linear in $(\lambda_i-\eta, \lambda_i-3\eta/4)$
and in $(\lambda_i+3\eta/4, \lambda_i+\eta),$ 
$i=1,2,...,n.$ 
 Define, for each $t=(t_1,t_2,...,t_n)\in \R^n,$ 
\beq\label{Dpsp-5}
\Theta_{\xi, \eta}(t)&=&\prod_{i=1}^n\theta_{\lambda_i, \eta}(t_i).
\eneq
Suppose that $A$ is a unital \CA\, and $(a_1,a_2,...,a_n)$ is an $n$-tuple  of self-adjoint elements in 
$A$ with $\|a_i\|\le M$ ($1\le i\le n$). 
Put
\beq
\Theta_{\xi, \eta}(a_1,a_2,...,a_n)&=&\theta_{\lambda_1,\eta}(a_1)\theta_{\lambda_2,\eta}(a_2)\cdots 
\theta_{\lambda_n,\eta}(a_n).
\eneq
Note that  we do not assume that $a_1, a_2,...,a_n$ mutually commute and  the product in \eqref{Dpsp-5} has 
a fixed order. 

For $x_j=(x_{j,1}, x_{j,2},...,x_{j,n})\in D^\eta,$
we may write $\theta_{j,i, \eta}:=\theta_{x_{j,i},\eta}$ and 
$\Theta_{j, \eta}:=\Theta_{x_j, \eta},$ $1\le i\le n,$ $j=1,2,...,m.$
Set 
\beq
s{\rm Sp}^\eta((a_1,a_2,..., a_n)=\bigcup_{_{\|\Theta_{j, \eta}(a_1,a_2,...,a_n)\|\ge 1-\eta}} \overline{B(x_j, \eta)}. 
\eneq
The set $s{\rm Sp}^\eta((a_1,a_2,...,a_n))$ is called $\eta$-synthetic-spectrum of the $n$-tuple 
$(a_1,a_2,...,a_n)$ which, by the definition, is compact. 

If $a_ia_j=a_ja_i$ for all $1\le i,j\le n,$ then 
$
{\rm Sp}((a_1, a_2,...,a_n))\subset s{\rm Sp}^\eta((a_1,a_2,...,a_n)).
$
\end{df}

The $\eta$-synthetic-spectrum can be tested. 
Suppose $a_1, a_2,...,a_n\in B(H)_{s.a.}$ with $\|a_i\|\le 1$ ($1\le i\le n$). In order to have $\|\Theta_{\lambda,\eta}\|\ge 1-\eta,$
it suffices to have one unit vector $x\in H$  such that
\beq\label{Test}
\la \Theta_{\lambda, \eta}(a_1,a_2,...,a_n)x, x\ra>1-\eta.
\eneq

\begin{df}\label{Dcpc} 
Let $A$ and $B$ be \CA s.  A linear map $L: A\to B$
is a c.p.c. map if it is completely positive and contractive.

\end{df}

\begin{df}\label{Dappsp}
Let $A$ be a  unital  \CA\, and $a_1,a_2,...,a_n\in A_{s.a.}$ for some $n\in \N.$
Let us assume that $\|a_i\|\le M$ (for some $M>0$), $i=1,2,...,n.$ 
Fix $0<\eta<1/2.$ Suppose  that $X\subset M^n$ is a compact subset 
and $L: C(X)\to A$ is a unital c.p.c. map
such that 
\beq
%
\hspace{-2.1in}{\rm (i)} &&\|L(e_j|_X)-a_j\|<\eta,\,\,\, 1\le j\le n, \\
\hspace{-2.1in}{\rm (ii)}&&\|L((e_je_i)|_X)-L(e_i|_X)L(e_j|_X)\|<\eta, \,\,\, 1\le i,j\le n,\andeqn\\
\hspace{-2.1in}{\rm (iii)} &&  \|L(f)\|\ge 1-\eta
\eneq
for any $f\in C(X)_+$  which has 
value $1$ on an open ball with the center $x\in X$ (for some $x$) and the radius $\eta.$
Then we say that $X$ is an $\eta$-near-spectrum of  the $n$-tuple 
$(a_1, a_2,...,a_n).$ 
We write 
$$
nSp^\eta((a_1,a_2,...,a_n)):=X.
$$

If, moreover,  $L$ is a unital \hm,  then  we say $X$ is 
an $\eta$- spectrum of the $n$-tuple $(a_1,a_2,...,a_n).$

Let $X$ be an $\eta$-near-spectrum and 
$Y$ be  a $\dt$-near -spectrum  for $(a_1,a_2,...,a_n),$ respectively.
Suppose that $\dt<\eta,$ then, by the definition, 
$Y$ is also an $\eta$-near-spectrum. 
In particular, if $nSp^\dt((a_1,a_2,...,a_n))\not=\emptyset,$ then 
$nSp^\eta((a_1,a_2,...,a_n))\not=\emptyset.$

\end{df}

\begin{prop}[Proposition 2.11 of \cite{Linself}] \label{Pappsp}
Fix $n\in \N.$ For any $\eta>0,$ there exists $\dt(n,\eta)>0$ satisfying the following:
Suppose that $A$ is a  unital \CA\, and $a_i\in A_{s.a.}$ with $\|a_i\|\le 1,$ $1\le i\le n,$
such that
\beq
\|a_ia_j-a_ja_i\|<\dt,\,\,\, 1\le i,j\le n.
\eneq
Then 

(1) 
$X:=s{\rm Sp}^\eta((a_1,a_2,...,a_n))\not=\emptyset$ and 

(2) $Y:=nSp^\eta((a_1,a_2,...,a_n))\not=\emptyset.$
%
%
\end{prop}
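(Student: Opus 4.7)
The plan is to argue by a sequence-algebra compactness reduction to the exactly commuting case, where continuous functional calculus trivially supplies both conclusions.  Fix $n$ and $\eta>0$ and suppose the proposition fails, producing unital $C^*$-algebras $A_k$ and self-adjoint $n$-tuples $(a_1^{(k)},\dots,a_n^{(k)})$ with $\|a_i^{(k)}\|\le 1$ and $\|[a_i^{(k)},a_j^{(k)}]\|\to 0$, for which either (1) or (2) fails at tolerance $\eta$ for every $k$.  Put $A_\infty:=\ell^\infty(A_k)/c_0(A_k)$ and $\bar a_i:=[a_i^{(k)}]\in A_\infty$.  Commutators vanish in the quotient, so $(\bar a_1,\dots,\bar a_n)$ is an exactly commuting $n$-tuple of self-adjoint contractions and generates with $1$ a commutative unital \SCA\, isomorphic to $C(X)$, where $X:=\mathrm{Sp}(\bar a_1,\dots,\bar a_n)\subset \I^n$ is nonempty compact.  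The crucial structural fact is that continuous functional calculus commutes with the quotient: for any continuous ordered noncommutative word $f$ in the generators, $\|f(\bar a_1,\dots,\bar a_n)\|=\limsup_k\|f(a_1^{(k)},\dots,a_n^{(k)})\|$.

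For (1), take any $t\in X$.  Because $D^\eta$ is $\eta/2$-dense in $\I^n$, some $x_j\in D^\eta$ satisfies $\|t-x_j\|_2<\eta/2<3\eta/4$, forcing $\theta_{x_{j,i},\eta}(t_i)=1$ coordinate-wise and hence $\Theta_{j,\eta}(t)=1$.  Commutative functional calculus then gives $\|\Theta_{j,\eta}(\bar a_1,\dots,\bar a_n)\|\ge 1$, so $\|\Theta_{j,\eta}(a_1^{(k)},\dots,a_n^{(k)})\|\ge 1-\eta$ for infinitely many $k$, placing $\overline{B(x_j,\eta)}\subset s\mathrm{Sp}^\eta((a_1^{(k)},\dots,a_n^{(k)}))$ and contradicting emptiness.

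For (2), let $\bar L:C(X)\to A_\infty$ be the unital isometric $*$-homomorphism of joint functional calculus, with $\bar L(e_i|_X)=\bar a_i$.  Since $C(X)$ is nuclear, the Choi--Effros lifting theorem produces a unital c.p.c.\ lift $\widetilde L:C(X)\to \ell^\infty(A_k)$ with coordinates $L_k:C(X)\to A_k$.  Conditions (i) and (ii) then hold for $k$ large, because $\|L_k(e_i|_X)-a_i^{(k)}\|\to 0$ and $\|L_k(e_ie_j|_X)-L_k(e_i|_X)L_k(e_j|_X)\|\to 0$ (multiplicativity of $\bar L$ on the data forces the deficits into $c_0(A_k)$).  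For (iii), fix an $(\eta/10)$-net $\{x_1,\dots,x_N\}\subset X$ and positive bumps $h_i\in C(X)_+$ with $h_i(x_i)=1$ and support in $X\cap B(x_i,\eta/2)$; then $\limsup_k\|L_k(h_i)\|=\|h_i\|=1$, so along a common subsequence $\|L_k(h_i)\|\ge 1-\eta$ for all $i$ simultaneously.  Given any admissible $f$ based at $x\in X$, choose $x_i$ within $\eta/10$ of $x$: then $B(x_i,\eta/2)\subset B(x,\eta)$ forces $h_i\le f$ pointwise, and positivity of the c.p.c.\ map yields $\|L_k(f)\|\ge\|L_k(h_i)\|\ge 1-\eta$.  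Hence $X$ is an $\eta$-near-spectrum of $(a_1^{(k)},\dots,a_n^{(k)})$ for such $k$, the desired contradiction.

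The main obstacle is the uniform verification of (iii) over the uncountable family of admissible test functions and base points $x\in X$.  The net-and-bump discretisation above trades the continuous parameter for finitely many norm estimates, and then positivity and monotonicity of the c.p.c.\ lift propagate the bound to arbitrary admissible $f$.  The remaining inputs---passage of functional calculus through the sequence-algebra quotient, and Choi--Effros lifting for the nuclear algebra $C(X)$---are standard.
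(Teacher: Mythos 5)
Your argument is correct and follows essentially the same route as the paper's own proof: a contradiction hypothesis reduced through the sequence algebra $\prod_k A_k/\bigoplus_k A_k$, identification of the commuting image with $C(X)$, the pointwise evaluation $\Theta_{x_j,\eta}(t)=1$ for a grid point $x_j$ near $t\in X$ to obtain (1), and a Choi--Effros lift together with a finite net of bump functions dominated by the admissible test functions to obtain (2). The only differences are cosmetic choices of net radii and bump shape; the paper uses an $\eta/4$-net with plateaued bumps where you use an $\eta/10$-net, but the inequalities close identically.
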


\begin{prop}[Proposition 2.15  of \cite{Linself}] \label{Pspuniq}
Fix $k\in \N.$  For any $\eta>0,$ there exits $\dt(k, \eta)>0$ satisfying the following:

Suppose that $A$ is a unital \CA\, and $a_1, a_2,...,a_k\in A_{s.a.}$ with $\|a_i\|\le 1$ 
($1\le i\le k$)
such that $(a_1, a_2,...,a_k)$ has 
a non-empty $\dt$-near-spectrum $X=nSp^\dt((a_1,a_2,...,a_n)).$ 
If 
$Y$ is also a non-empty  $\dt$-near-spectrum of $(a_1,a_2,...,a_k),$
and $Z=s{\rm Sp}^\eta((a_1,a_2,...,a_k))\not=\emptyset,$ 
then 
\beq
d_H(X, Y)<\eta\andeqn  X, Y\subset Z\subset X_{2\eta}.
\eneq
\end{prop}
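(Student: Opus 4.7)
The plan is to use the bump functions $\Theta_{j,\eta}$ indexed by grid points $x_j \in D^\eta$ as a common ``test family'' against which any $\delta$-near-spectrum can be compared with the synthetic spectrum $Z$. The core technical ingredient is a \emph{transfer lemma}: for $\delta = \delta(k,\eta)$ chosen sufficiently small, any $\delta$-near-spectrum $X$ with witnessing c.p.c.\ map $L\colon C(X) \to A$ satisfies
$$
\bigl\| L(\Theta_{j,\eta}|_X) - \Theta_{j,\eta}(a_1, a_2, \ldots, a_k) \bigr\| < \eta/4
$$
for every $x_j \in D^\eta$. This is where conditions (i) and (ii) of Definition \ref{Dappsp} enter: $\Theta_{j,\eta}$ is a product of piecewise-linear bumps in the individual coordinates $e_1, \ldots, e_k$, and so can be uniformly approximated on $M^n$ by a polynomial in $e_1, \ldots, e_k$ of degree depending only on $\eta$ and $k$. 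Iterating (ii) then converts $L$ applied to this polynomial into the corresponding polynomial in $a_1, \ldots, a_k$, with the total error controlled by $\delta$, $k$, and the polynomial degree.

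Granting the transfer lemma, the inclusion $X \subset Z$ is immediate. For $x \in X$, the $\eta/2$-density of $D^\eta$ supplies a grid point $x_j$ with $\|x - x_j\| < \eta/2$. On the small ball $B(x, \delta) \cap X$ the function $\Theta_{j,\eta}$ is identically $1$ (since that ball sits inside the $3\eta/4$-cube around $x_j$ on which every factor equals one), so property (iii) yields $\| L(\Theta_{j,\eta}|_X) \| \geq 1 - \delta$, which the transfer lemma promotes to $\|\Theta_{j,\eta}(a_1, \ldots, a_k)\| > 1 - \eta$. Thus $x_j$ is a qualifying grid point for $Z$, and $x \in \overline{B(x_j, \eta)} \subset Z$. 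Running the same argument with a c.p.c.\ map $L_Y$ witnessing $Y$ gives $Y \subset Z$.

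For the reverse inclusion $Z \subset X_{2\eta}$, take $z \in Z$ together with its grid point $x_j$ satisfying $\|z - x_j\| \leq \eta$ and $\|\Theta_{j,\eta}(a_1,\ldots,a_k)\| \geq 1 - \eta$. If $x_j$ were too far from $X$ (in the sup-norm used to build the bump support, the discrepancy with the Euclidean norm being absorbed into the choice of $\delta$), then $\Theta_{j,\eta}$ would vanish identically on $X$, giving $L(\Theta_{j,\eta}|_X) = 0$ and contradicting the transfer lemma. Hence $\mathrm{dist}(x_j, X) < \eta$ and $z \in X_{2\eta}$; symmetrically $Z \subset Y_{2\eta}$. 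Combining with $X, Y \subset Z$ and a further refinement of $\delta$ so that the slack in the two previous steps drops below $\eta/2$ on each side, one concludes $d_H(X, Y) < \eta$.

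The main obstacle is the quantitative form of the transfer lemma. Conditions (i) and (ii) are only first-order approximate identities on single generators and on pairs, so passing from them to a norm estimate on a function of all $k$ operators simultaneously requires careful bookkeeping: the degree of polynomial approximation for the product of bumps depends on $\eta$ and $k$, and the almost-multiplicativity error from (ii) compounds under the repeated reorderings and substitutions needed to replace each $L(e_i|_X)$ by $a_i$ inside high-degree monomials. This is the step that forces $\delta$ to depend on both $k$ and $\eta$, and it is of the same functional-calculus-for-almost-commuting-tuples flavour that underlies Proposition \ref{Pappsp}.
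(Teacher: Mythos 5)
Your overall strategy for the inclusions --- establish a transfer estimate $\|L(\Theta_{j,\eta}|_X)-\Theta_{j,\eta}(a_1,\ldots,a_k)\|<\eta/2$ and then read off $X,Y\subset Z\subset X_{2\eta}$ from the test functions $\Theta_{j,\eta}$ --- is exactly the paper's. The one real difference there is how the transfer estimate is obtained: the paper proves it (its Corollary on $\Theta_{j,\eta}$, via a perturbation lemma for almost-multiplicative c.p.c.\ maps) by a soft compactness/ultraproduct argument, whereas you propose explicit polynomial approximation with error bookkeeping. Your route is viable but note that condition (ii) only controls degree-two monomials in the generators, so ``iterating (ii)'' is not literal: one must first show each $e_i$ lies approximately in the multiplicative domain of the c.p.c.\ map and then propagate almost-multiplicativity to higher-degree monomials. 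The compactness argument sidesteps all of that bookkeeping; your version buys an explicit (in principle computable) dependence of $\delta$ on $k$ and $\eta$.

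The genuine gap is in the Hausdorff-distance estimate. From $X,Y\subset Z\subset X_{2\eta}\cap Y_{2\eta}$ you only get $d_H(X,Y)\le 2\eta$, and the proposed repair --- ``a further refinement of $\delta$ so that the slack drops below $\eta/2$ on each side'' --- cannot work: the $2\eta$ slack in $Z\subset X_{2\eta}$ comes from the fact that $Z$ is by definition a union of closed balls of radius $\eta$ whose centers are only guaranteed to meet $X$ within distance $\eta$; neither of those two $\eta$'s shrinks when $\delta$ does, since they are baked into the definition of $s{\rm Sp}^{\eta}$. To get $d_H(X,Y)<\eta$ one must compare $X$ and $Y$ at a strictly finer scale than the one defining $Z$. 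The paper does this directly: it takes an $\eta/8$-dense family of bump functions $f_j$ equal to $1$ on balls of radius $\eta/4$ and supported in balls of radius $\eta/2$, uses the perturbation lemma to deduce that $\|L_2(f_j|_Y)\|$ is large whenever $\|L_1(f_j|_X)\|$ is, and concludes ${\rm dist}(\xi,Y)<5\eta/8$ for each $\xi\in X$ (and symmetrically). Alternatively, within your framework you could run the ball argument with $s{\rm Sp}^{\eta/4}$ in place of $s{\rm Sp}^{\eta}$ (taking $\delta\le\delta(k,\eta/4)$), obtaining $X,Y\subset s{\rm Sp}^{\eta/4}\subset X_{\eta/2}\cap Y_{\eta/2}$ and hence $d_H(X,Y)\le \eta/2$; but as written this step of your proof is not correct.
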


\section{The case of small commutators}

\begin{lem}\label{LXX}
Let $\Omega$ be a compact metric space, $X\subset \Omega$ a compact subset and 
$X_i\subset \Omega$ be a sequence of compact subsets.
Suppose that 
\beq\label{LXX-0}
\lim_{i\to\infty}{\rm dist}_H(X_i, X)=0.
\eneq
Suppose that ${\cal G}\subset C(\Omega)$ is a finite subset 
which generates $C(\Omega)$ and 
there are a sequence of decreasing numbers $\eta_i\searrow 0,$ 
and  a sequence of c.p.c. maps $L_i: C(\Omega)\to A_i$ for some unital \CA\, $A_i$ such that
\beq\label{LXX-1}
&&\| L_i(fg)-L_i(f)L_i(g)\|<\eta_i\tforal f,g\in {\cal G}\tand\\\label{LXX-2}
&&\|L_i(h)\|\ge 1-\eta_i
\eneq
for any $h\in C(X)_+$ with value 1 in an open ball with center at some point $x\in  X_i$ and 
radius $\eta_i,$ $i=1,2,....$

Let $\Psi: C(\Omega)\to \prod_{i=1}^\infty A_i/\bigoplus_{i=1}^\infty A_i$
be the \hm\, defined by $\Psi(f)=\Pi\circ \{L_i(f)\}$ for all $f\in C(\Omega),$
where $\Pi:  \prod_{i=1}^\infty A_i\to  \prod_{i=1}^\infty A_i/\bigoplus_{i=1}^\infty A_i$ is the quotient map.
Then  ${\rm ker}\Psi=\{f\in C(\Omega): f|_X=0\}.$
\end{lem}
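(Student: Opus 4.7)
The plan is to promote $\Psi$ to a $*$-homomorphism, identify its kernel as $\{f\in C(\Omega):f|_Z=0\}$ for some closed $Z\subseteq\Omega$, and then prove $Z=X$ by establishing both inclusions.

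First I would extend asymptotic multiplicativity from $\mathcal G$ to all of $C(\Omega)$. By induction on polynomial degree, (\ref{LXX-1}) gives $\|L_i(p)-p(L_i|_{\mathcal G})\|\to 0$ for every polynomial $p$ in $\mathcal G$; Stone--Weierstrass (since $\mathcal G$ generates $C(\Omega)$) combined with contractivity of $L_i$ then yields $\|L_i(fg)-L_i(f)L_i(g)\|\to 0$ for arbitrary $f,g\in C(\Omega)$. Passing to $Q=\prod A_i/\bigoplus A_i$, $\Psi$ is multiplicative; it is linear and $*$-preserving because $L_i$ is c.p.c., hence a $*$-homomorphism of commutative $C^*$-algebras. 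Its kernel is therefore a closed ideal, so $\ker\Psi=\{f\in C(\Omega):f|_Z=0\}$ for a unique closed $Z\subseteq\Omega$, and the job reduces to $Z=X$.

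For $X\subseteq Z$, fix $x\in X$ and $r>0$ and build $f\in C(\Omega)_+$ with $\|f\|=1$, $f\equiv 1$ on $\overline{B(x,r/2)}$, and $\mathrm{supp}(f)\subseteq B(x,r)$. By (\ref{LXX-0}) there is $x_i\in X_i$ with $d(x_i,x)<r/4$ for large $i$, and once $\eta_i<r/4$ the bump $f$ equals $1$ on $B(x_i,\eta_i)$, so (\ref{LXX-2}) gives $\|L_i(f)\|\ge 1-\eta_i$ and $\|\Psi(f)\|=1$. Thus $f\notin\ker\Psi$, which forces $Z$ to meet $\overline{B(x,r)}$; closedness of $Z$ together with $r\searrow 0$ gives $x\in Z$.

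For $Z\subseteq X$, take $z\in\Omega\setminus X$, set $d=\mathrm{dist}(z,X)/3>0$, and pick $g\in C(\Omega)_+$ with $g(z)=1$, $\mathrm{supp}(g)\subseteq B(z,d)$; I would show $\Psi(g)=0$ via a complementary bump $h\in C(\Omega)_+$ that is $\equiv 1$ on $B(X,d)$ and supported in $B(X,2d)$. Disjointness of $\mathrm{supp}(g)$ and $\mathrm{supp}(h)$ gives $gh\equiv 0$, so asymptotic multiplicativity yields $\Psi(g)\Psi(h)=0$; on the other hand $h\equiv 1$ on $B(x_i,\eta_i)$ for $x_i\in X_i\subseteq B(X,d/2)$ and large $i$, so (\ref{LXX-2}) gives $\|\Psi(h)\|=1$, and $h^2\le h$ combined with multiplicativity makes $\Psi(h)$ a norm-one positive projection in the image $C(Z)$. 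The endgame is to upgrade this projection to $\Psi(h)=1_{C(Z)}$, so that $\Psi(g)=\Psi(g)\Psi(h)=0$ and hence $z\notin Z$. This upgrade---ruling out that $Z$ carries a ``phantom'' piece on which $h$ vanishes---is the main obstacle, and is where I expect the bulk of the technical effort to go. I would approach it by running the construction over a decreasing sequence of neighborhoods $U_k\searrow X$ with bumps $h_k\equiv 1$ on $U_k$, extracting spectral information about the $\Psi(h_k)$ in $C(Z)$, and then using the multiplicative structure together with an approximate-unit/Urysohn argument to trap $Z\subseteq\bigcap_k U_k=X$.
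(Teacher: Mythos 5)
Your argument for $\ker\Psi\subseteq\{f\in C(\Omega):f|_X=0\}$ (your $X\subseteq Z$) is essentially the paper's: reduce to a positive $f$ equal to $1$ at some $x\in X$, dominate a Urysohn bump $h$ (supported near $x$, equal to $1$ on a smaller ball) by $2f$, and use positivity of $L_i$ together with \eqref{LXX-2} to get $\|L_i(2f)\|\ge\|L_i(h)\|\ge 1-\eta_i$ eventually; your preliminary upgrade of $\Psi$ to a $*$-homomorphism via Stone--Weierstrass is also the paper's (tacit) opening move.

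The other inclusion is where you diverge from the paper, and the ``phantom piece'' you flag is a genuine obstruction: the lemma as literally stated is false. Take $\Omega=[0,1]$, $X=X_i=\{0\}$, $A_i=M_2$, and $L_i(f)=\diag(f(0),f(1))$. Each $L_i$ is a unital $*$-homomorphism (so \eqref{LXX-1} holds for any $\eta_i$), it satisfies \eqref{LXX-2} because any $h\ge 0$ equal to $1$ near $0$ gives $\|L_i(h)\|\ge h(0)=1$, and \eqref{LXX-0} is vacuous; yet $\ker\Psi=\{f:f(0)=f(1)=0\}$, strictly smaller than $\{f:f(0)=0\}$, and $Z=\{0,1\}$ is exactly the set on which your $\Psi(h)$ refuses to be the identity. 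What rescues the paper's short argument for this inclusion is an unstated hypothesis that holds in the only intended application (the proof of Lemma~\ref{Ldig}): there each $L_i$ factors through the restriction $C(\Omega)\to C(X_i)$, i.e.\ $L_i(f)$ depends only on $f|_{X_i}$. Granting that, the direction you find hard becomes a one-liner---choose $\dt>0$ with $|f|<\ep$ on the $\dt$-neighborhood of $X$, note $X_i$ lies inside it for large $i$ by \eqref{LXX-0}, and conclude $\|L_i(f)\|\le\|f|_{X_i}\|<\ep$. (This is precisely why the paper's proof writes $\|L_i(f|_{X_i})\|$ rather than $\|L_i(f)\|$.) Without a localization assumption of this kind, your nested-neighborhood and spectral-projection program cannot close the gap, because the conclusion itself fails; so the fix is to add the hypothesis, not to push harder on the projection argument.
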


\begin{proof}
First we note, by \eqref{LXX-1},  that $\Psi=\Pi\circ \{\L_i\}$ is indeed a \hm, since 
${\cal G}$ is a generating set.

Let $f\in C(\Omega)$ such that $f|_X=0.$
To show that $\Psi(f)=0,$ let $\ep>0.$ 
Since $X$ is compact, there is $\dt>0$ such that
\beq
\|f(x)\|<\ep \rforal x\in \{x\in \Omega: {\rm dist}(x, X)<\dt\}.
\eneq
By \eqref{LXX-0}, there exists $i_0\in \N$  such that
\beq
X_i\subset \{x\in \Omega: {\rm dist}(x, X)<\dt\}\rforal i\ge i_0.
\eneq
Hence 
\beq
\|L_i(f|_{X_i})\|<\ep\rforal i\ge i_0.
\eneq
It follows that 
\beq
\|\Psi(f)\|<\ep.
\eneq
Since this holds for any $\ep>0,$ we have $\Psi(f)=0.$
Hence 
\beq
\{f\in C(\Omega): f|_X=0\}\subset {\rm ker}\Psi.
\eneq

To complete the proof of the lemma, it suffices to show that 
if $f\not\in \{g\in C(\Omega): g|_X=0\},$ then $\Psi(f)\not=0.$
Since $I=\{g\in C(\Omega): g|_X=0\}$ is an ideal and $\Psi(f)^*\Psi(f)=\Psi(f^*f),$
it suffices 
show that
if $f\not\in \{g\in C(\Omega): g|_X=0\}$ and $f\ge 0,$ then $\Psi(f)\not=0.$

Thus we assume that $f\ge 0.$ 
Then there is $x\in X$ such that
$f(x)>0.$  

By considering $(1/f(x))f,$ to simplify the notation, we may assume 
that $f(x)=1.$  There exists $\dt_1>0$ such that
$|f(\zeta)|\ge 1/2$ for all $\zeta\in B(x, \dt_1).$ Choose $h\in C(\Omega)_+$
such that  $0\le h\le 1,$ $h(y)=1$ if $y\in B(x, \dt_1/2)$ and $h(y)=0$ if 
$y\not\in B(x, \dt_1).$ 
Hence $ 2f\ge h.$ 

By \eqref{LXX-0}, there exists $i_1\in \N$ such that, for all $i\ge i_0,$ there exists $x_i\in X_i$
such that
\beq
{\rm dist}(x_i, x)<\dt_1/8 \andeqn \eta_i<\dt_1/16.
\eneq
Hence $B(x_i,\dt_1/8)\subset B(x, \dt_1/2).$ 
It follows from \eqref{LXX-2} that
\beq
\|L_i(2f)\|\ge \|L_i(h)\|\ge 1-\eta_i\rforal i\ge i_1.
\eneq
Hence $\Psi(2f)\ge 1$ and $\Psi(f)\not=0.$
\end{proof}

\begin{lem}\label{Lcorona}
Let  $X$ be a compact metric space and $B$ be a \CA\, which is a corona algebra of real rank zero.
Suppose that $\phi: C(X)\to B$ is a unital injective \hm. 
Then, for any finite subset $S=\{\zeta_1, \zeta_2,...,\zeta_m\}\subset X,$ 
there are mutually orthogonal non-zero projections 
$d_1, d_2,...,d_m\in B$  with $d=\sum_{i=1}^m d_i$ such that
\beq
&&(1-d)\phi(f)=\phi(f)(1-d)\tand\\
&&\phi(f)=\sum_{i=1}^m f(\zeta_i)d_i+(1-d)\phi(f)(1-d)\rforal f\in C(X).
\eneq
\end{lem}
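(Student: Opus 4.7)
The plan is to reduce the lemma to producing, for each $1 \le j \le m$, a nonzero projection $d_j \in B$ acting as an eigenprojection of $\phi$ at $\zeta_j$:
\[ d_j \phi(f) = \phi(f) d_j = f(\zeta_j) d_j \qquad \forall\, f \in C(X), \]
with $d_i d_j = 0$ for $i \ne j$. Granted this, set $d = \sum_j d_j$. Then $d\phi(f) = \sum_j f(\zeta_j) d_j = \phi(f) d$, so $1-d$ commutes with $\phi(f)$, and
\[ \phi(f) = d\phi(f) + (1-d)\phi(f)(1-d) = \sum_{i=1}^m f(\zeta_i) d_i + (1-d)\phi(f)(1-d), \]
which is exactly the conclusion.

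Let $I_j = \{g \in C(X): g(\zeta_j) = 0\}$, the maximal ideal at $\zeta_j$. Writing $f = (f - f(\zeta_j)\cdot 1) + f(\zeta_j)\cdot 1$ and using $\phi(1) = 1$, the eigenprojection identity reduces to the single requirement $\phi(g) d_j = 0$ for every $g \in I_j$; the opposite one-sided version then follows by taking adjoints, since $I_j$ is self-adjoint. I would choose, for each $j$ and each $k \in \N$, a positive contraction $h_{j,k} \in C(X)$ with $h_{j,k}(\zeta_j) = 1$, $h_{j,k+1} \le h_{j,k}$, and $\mathrm{supp}(h_{j,k}) \subset B(\zeta_j, 1/k)$; for $k$ large the supports are pairwise disjoint across $j$, so $\phi(h_{i,k})\phi(h_{j,k}) = 0$ whenever $i \ne j$, and $\|\phi(h_{j,k})\| = 1$ by injectivity of $\phi$. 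If for each $j$ I can produce a nonzero projection $d_j \in B$ satisfying $\phi(h_{j,k}) d_j = d_j$ for every $k$, then for any $g \in I_j$,
\[ \phi(g) d_j = \phi(g)\phi(h_{j,k}) d_j = \phi(g h_{j,k}) d_j, \qquad \|g h_{j,k}\|_\infty \le \sup_{\mathrm{supp}(h_{j,k})} |g| \;\longrightarrow\; 0, \]
forcing $\phi(g) d_j = 0$. Mutual orthogonality is automatic: taking adjoints in $\phi(h_{j,k}) d_j = d_j$ gives $d_j = d_j \phi(h_{j,k})$, so for $k$ large with disjoint supports $d_j \phi(h_{i,k}) = d_j \phi(h_{j,k})\phi(h_{i,k}) = 0$, and hence $d_j d_i = d_j \phi(h_{i,k}) d_i = 0$.

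The remaining, and genuinely non-routine, step is the corona-plus-real-rank-zero fact that any decreasing sequence $(a_k)_k$ of positive contractions of norm $1$ in $B$ admits a nonzero projection $d \in B$ with $a_k d = d$ for every $k$. This is the main obstacle, and both hypotheses on $B$ are essential here (in a commutative real-rank-zero algebra, such a $d$ need not exist). My approach is to write $B = M(A)/A$ with $A$ $\sigma$-unital, use real rank zero of $B$ together with spectral functional calculus to build a decreasing sequence of nonzero projections $p_k \in B$ with $a_k p_k \ge (1 - 1/k) p_k$ and $p_{k+1} \le p_k$, and then exploit a sub-$\sigma$-Stonean/Kasparov-technical-type property of the corona to extract a single nonzero projection $d$ dominated by every $p_k$, whence $a_k d = d$ exactly. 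The passage from a decreasing sequence of approximate eigenprojections to a single exact eigenprojection is where the corona structure does the essential work; everything else is algebraic bookkeeping.
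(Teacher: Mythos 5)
Your reduction to producing, for each $j$, a nonzero projection $d_j$ with $\phi(g)d_j=d_j\phi(g)=0$ for all $g\in I_j$ is exactly the paper's reduction, and your derivation of the eigenprojection identity and mutual orthogonality from that is correct. The issue is entirely in the step you yourself flag as ``genuinely non-routine.''

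Your proposed construction of a \emph{decreasing} sequence of nonzero projections $p_k$ with $a_k p_k\ge(1-1/k)p_k$ is not justified and, as stated, does not work inductively. Having chosen $p_1$, the next step forces you to work inside $p_1Bp_1$, but nothing guarantees $\|p_1a_2p_1\|$ is anywhere near $1$; the norm of the cutdown can drop arbitrarily, so real rank zero gives you no projection $p_2\le p_1$ with $a_2p_2\ge\tfrac12 p_2$. Without the nesting $p_{k+1}\le p_k$, there is no ``intersection'' to take, and the subsequent appeal to a vaguely named ``sub-$\sigma$-Stonean/Kasparov-technical-type property'' is not a proof. The paper avoids this entirely: it sets $D_j=\overline{\phi(I_j)\,B\,\phi(I_j)}$, a $\sigma$-unital hereditary \SCA\, of $B$, observes that injectivity of $\phi$ forces $D_j\ne B$, and then invokes Pedersen's double annihilator theorem for corona algebras (Theorem 15 of \cite{PedSAW}), which gives $D_j=(D_j^\perp)^\perp$ and hence $D_j^\perp\ne\{0\}$; real rank zero of the hereditary \SCA\, $D_j^\perp$ (Corollary~2.8 of \cite{BP}) then yields a nonzero projection $d_j$ there. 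That projection automatically annihilates $\phi(I_j)$ from both sides (since $\{1-h_{j,k}\}$ is an approximate identity of $I_j$, one has $E:=\overline{\bigcup_k\phi(1-h_{j,k})B\phi(1-h_{j,k})}=D_j$, so your target $d_j$ is precisely a nonzero projection in $D_j^\perp$). In short: your target object is the right one, but your route to it has a real gap; replacing the decreasing-projection scheme by the hereditary-subalgebra/double-annihilator argument closes it.
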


\begin{proof}
For each $\zeta_k\in X,$
let 
\beq
C_k=\{f\in C(X): f(\zeta_k)=0\},
\eneq
an ideal of $C(X).$
Put $D_k=\overline{\phi(C_k)B\phi(C_k)}.$  Then $D_k$ is a $\sigma$-unital hereditary \SCA\, of $B.$
Since $\phi$ is injective,  no elements in $D_k$ is invertible.
Hence $D_k\not=B.$
Since $B$ is a corona algebra, by Pedersen's double annihilator theorem (Theorem 15 of \cite{PedSAW}),
\beq
D_k=(D_k^\perp)^\perp.
\eneq
 It follows that $D_k^\perp=\{c\in B: cd=dc=0\rforal d\in D_k\}\not=\{0\}.$
 Since $B$ has real rank zero, the
hereditary \SCA\, $D_k^\perp$ also has real rank zero (Corollary  2.8 of \cite{BP}).
 Choose a non-zero projection $d_k\in D_k^\perp,$ $k=1, 2,...,m.$
 Note that $d_jd_k=0=d_kd_j$ if $k\not=j.$

Let $F_k=\{x\in B: xd,dx\in D_k\}$ be the idealizer of $D_k$ in $B$ and 
 $E_k=\phi(C(X))+D_k.$  Then $E_k\subset F_k$ and $d_k\in F_k.$
 Let $\pi_k: F_k\to F_k/D_k$ be the quotient map. 
 Then $\pi_k\circ \phi(f)=f(\zeta_k)$ for all $f\in C(X).$ 
 It follows that $d_k\phi(f)=\phi(f)d_k=f(\zeta_k)d_k$ for all $f\in C(X),$  $k=1,2,...,m.$
 Define $d=\sum_{k=1}^m d_k.$
 Then 
 \beq
 &&d\phi(f)=\phi(f)d=\sum_{k=1}^m f(\zeta_k)d_k\andeqn\\
 &&\phi(f)=\sum_{k=1}^m f(\zeta_k)d_k+(1-d)\phi(f)(1-d)\rforal f\in C(X).
 \eneq
\end{proof}

\begin{lem}\label{Ldig}
Let $\ep>0,$ $\Omega$ be a compact metric space, and 
${\cal G}\subset C(\Omega)$ be a finite generating set. 
 There exists $\dt(n, \ep)>0$ satisfying the following:
Suppose that $A$ is a unital \CA\, of real rank zero and $X\subset \Omega$
is a non-empty compact subset 
 and 
$\phi: C(X)\to A$ is a c.p.c. map such that
\beq\nonumber
\|\phi(g_1|_X g_2|_X)-\phi(g_1|_X)\phi(g_2|_X)\|<\dt \tforal g_1,g_2\in {\cal G},\,\,\, 1\le i,j\le n\tand
\|\phi(f)\|\ge 1-\dt
\eneq
for any $f\in C(X)_+$  which has 
value $1$ on an open ball with the center $x\in X$ (for some $x$) and the radius $\dt.$
Then, 
there are $\xi_1, \xi_2,..., \xi_m\in X$ which is $\ep$-dense in 
$X,$
 and 
mutually orthogonal non-zero projections 
$p_1,p_2,..., p_m\in A$ such that, for all $f\in {\cal G},$
\beq
\|\sum_{k=1}^m f(\xi_k) p_k+(1-p)\phi(f)(1-p)-\phi(f)\|<\ep.
\eneq 
\end{lem}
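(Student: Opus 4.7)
The plan is to argue by contradiction, using a corona-algebra construction to upgrade the approximate hypotheses to an exact identity in which Lemma \ref{Lcorona} applies directly. Assume the conclusion fails for some $\epsilon>0$. Then there exist sequences $\delta_i\searrow 0$, unital $C^*$-algebras $A_i$ of real rank zero, non-empty compact subsets $X_i\subset \Omega$, and c.p.c.\ maps $\phi_i\colon C(X_i)\to A_i$ satisfying the stated hypotheses with constant $\delta_i$, yet for which no $\epsilon$-dense family $\{\xi_1,\ldots,\xi_m\}\subset X_i$ together with mutually orthogonal non-zero projections in $A_i$ yields the required estimate. By compactness of $(F(\Omega),d_H)$, after passing to a subsequence we may assume $X_i\to X$ in Hausdorff distance for some non-empty compact $X\subset\Omega$.

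Next, extend each $\phi_i$ to a c.p.c.\ map $\tilde\phi_i\colon C(\Omega)\to A_i$ by $\tilde\phi_i(f)=\phi_i(f|_{X_i})$. The two hypotheses of the lemma translate, via restriction, into exactly the hypotheses of Lemma \ref{LXX} with constants $\eta_i=\delta_i$: approximate multiplicativity on the generating set $\mathcal{G}$ is preserved under restriction, and a bump function in $C(\Omega)_+$ equal to $1$ on a small ball centered at a point of $X_i$ restricts to an analogous bump in $C(X_i)_+$. Lemma \ref{LXX} then produces a unital $*$-homomorphism $\Psi\colon C(\Omega)\to B$, where $B:=\prod_i A_i/\bigoplus_i A_i$, with $\ker\Psi=\{f:f|_X=0\}$. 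Hence $\Psi$ factors through an injective unital $*$-homomorphism $\bar\Psi\colon C(X)\to B$.

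The algebra $B$ is a corona algebra of real rank zero: each $A_i$ being unital of real rank zero forces $\bigoplus_i A_i$ to have real rank zero, and this property passes to the corona $M(\bigoplus_i A_i)/\bigoplus_i A_i=B$ by Brown--Pedersen-type arguments. Choose an $\epsilon$-dense finite set $\xi_1,\ldots,\xi_m\in X$ and apply Lemma \ref{Lcorona} to $\bar\Psi$. This yields mutually orthogonal non-zero projections $\bar d_1,\ldots,\bar d_m\in B$ with $\bar d:=\sum_k \bar d_k$ satisfying
\[
\bar\Psi(f)=\sum_{k=1}^m f(\xi_k)\bar d_k+(1-\bar d)\bar\Psi(f)(1-\bar d)\qquad\text{for all } f\in C(X).
\]

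To finish, lift each $\bar d_k$ to a self-adjoint sequence $(d_{k,i})_i\in\prod_i A_i$ so that $\|d_{k,i}^2-d_{k,i}\|\to 0$ and $\|d_{k,i}d_{l,i}\|\to 0$ for $k\ne l$; by continuous functional calculus and an iterated orthogonalization of near-projections, replace them with mutually orthogonal projections $p_k^{(i)}\in A_i$ for all sufficiently large $i$. Since $X_i\to X$ in Hausdorff distance, select $\xi_k^{(i)}\in X_i$ with $\xi_k^{(i)}\to\xi_k$. Evaluating the displayed identity on each $f\in\mathcal{G}$ at the sequence level, and using uniform continuity of $f$ to replace $f(\xi_k)$ by $f(\xi_k^{(i)})$ with vanishing error, gives, with $p^{(i)}=\sum_k p_k^{(i)}$,
\[
\bigl\|\phi_i(f|_{X_i})-\sum_{k=1}^m f(\xi_k^{(i)})p_k^{(i)}-(1-p^{(i)})\phi_i(f|_{X_i})(1-p^{(i)})\bigr\|\longrightarrow 0,
\]
which contradicts the failure assumption at large $i$. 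The main obstacle is ensuring that $B$ has real rank zero so that Lemma \ref{Lcorona} is actually applicable, together with lifting the structural identity from the corona back to the individual $A_i$ in a controlled way, in particular synchronizing the choice of $\xi_k^{(i)}\in X_i$ with $\xi_k\in X$ so that the error in $f(\xi_k^{(i)})-f(\xi_k)$ does not spoil the final estimate.
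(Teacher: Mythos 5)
Your overall strategy is the same as the paper's: argue by contradiction, pass to a Hausdorff-convergent subsequence $X_i\to X$, extend the c.p.c.\ maps to $C(\Omega)$ and apply Lemma~\ref{LXX} to get an injective unital homomorphism $\bar\Psi\colon C(X)\to Q:=\prod_iA_i/\bigoplus_iA_i$, apply Lemma~\ref{Lcorona}, lift the projections to $\prod_iA_i$, and evaluate coordinatewise to contradict the failure assumption for large~$i$. Your handling of the last step (choosing $\xi_k^{(i)}\in X_i$ near $\xi_k$ and absorbing the error via uniform continuity of the $g\in\mathcal G$) is fine.

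The one step you cannot leave as stated is the verification that $Q$ has real rank zero. You assert that each $A_i$ of real rank zero forces $\bigoplus_iA_i$ to have real rank zero, and that the property ``passes to the corona $M(\bigoplus_iA_i)/\bigoplus_iA_i$ by Brown--Pedersen-type arguments.'' There is no such general implication: real rank zero of a $\sigma$-unital C*-algebra does not by itself guarantee real rank zero of its multiplier or corona algebra (the known positive results, e.g.\ Zhang's, require extra hypotheses such as pure infiniteness or an AF structure, or a $K$-theoretic condition). The correct, and much simpler, route is the one the paper takes (in a commented-out block preceding Lemma~\ref{Lcorona}): because each $A_i$ is unital of real rank zero, the product $\prod_iA_i=M(\bigoplus_iA_i)$ already has real rank zero by a direct componentwise argument (approximate each $a_i$ by a self-adjoint invertible $b_i$, then compose with a continuous cutoff $h$ with $|h|\ge\ep/4$ so that $\sup_i\|h(b_i)^{-1}\|\le 4/\ep$ is uniformly bounded); real rank zero then passes to the quotient $Q=\prod_iA_i/\bigoplus_iA_i$ by Brown--Pedersen. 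Once that is in place, Lemma~\ref{Lcorona} applies and the rest of your argument goes through. Your lifting of the $\bar d_k$ via near-projections and iterated orthogonalization is a slightly more hands-on version of the paper's appeal to the standard fact that projections lift from $\prod_iA_i/\bigoplus_iA_i$ to $\prod_iA_i$; either version is acceptable.
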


\begin{proof}
We prove this by contradiction. So we  assume the lemma is false.
Then, we obtain positive numbers $\ep_0>0,$ 
and $\ep_1>0,$  
a sequence of unital \CA s $\{A_i\}$ of real rank zero, 
a sequence of compact subsets $X_i\subset \Omega,$ 
a sequence of c.p.c. maps $L_i: C(X_i)\to A_i$ ($i\in \N$)  and 
a sequence of positive numbers $\eta_i$ such that $\eta_i\searrow 0$ 
such that
\beq\label{LLd-2}
&&\lim_{i\to\infty}\|L_i(fg|_{X_i})-L_i(f|_{X_i})L_i(g|_{X_i})\|=0\rforal f, g\in {\cal G}\andeqn\\\label{LLd-3}
&&\|L_i(h)\|\ge 1-\eta_i
\eneq
for any $h\in C(X_i)_+$ which has value 1 on an open ball of radius $\eta_i$  with center at a 
point $x\in X_i,$ but 
\beq\label{LLd-5}
\inf\{\sup\{\|\sum_{k=1}^{m_i} g(\xi_{i,k}) p_{k,i}+(1-q_i)L_i(g|_{X_i})(1-q_i)-L_i(g|_{X_i})\|: g\in {\cal G}\}\}\ge \ep_0,
\eneq 
where the  infimum is taken among all mutually orthogonal non-zero projections 
$\{p_{i,1}, p_{i,2},...,p_{i, m_i}\}$ in $A_i$ with
$q_i=1-\sum_{k=1}^{m_i}p_{i,k}$  
and all finite $\ep_1$-dense subset $\{\xi_{i,1},\xi_{i,2},...,\xi_{i,m_i}\}$
of $X_i.$

Note that $(F(\Omega), d_H)$ is compact.
Therefore, by passing to a subsequence, we may assume 
that there is a compact subset $X\subset \Omega$ such 
that
\beq\label{LLd-6+1}
\lim_{i\to\infty}{\rm dist}_H(X_i, X)=0.
\eneq

Let $B=\prod_{i=1}^\infty A_i,$  
$\Phi: C(\Omega)\to B$ be defined by $\Phi(f)=\{L_i(f|_{X_i})\}$ for $f\in C(\Omega),$ and 
$\Pi: B\to B/\bigoplus_{i=1}^\infty A_i$ be the quotient map (note that
$\bigoplus_{i=1}^\infty A_i=\{\{a_i\}: a_i\in A_i\andeqn \lim_i\|a_i\|=0\}$).
Then, by \eqref{LLd-2},  $\Psi'=\Pi\circ  \Phi': C(\Omega)\to B/\bigoplus_{i=1}^\infty A_i$ is a \hm. 
Define $L_i': C(\Omega)\to A_i$ by $L_i'(f)=L_i(f|_{X_i})$ for $f\in C(\Omega).$ 
By \eqref{LLd-6+1}, \eqref{LLd-3} and by Lemma \ref{LXX}, 
\beq
{\rm ker}\Psi'=\{f\in C(\Omega): f|_X=0\}.
\eneq
Thus  $\Psi'$ induces an injective \hm\, 
$\Psi: C(X)\to B/\bigoplus_{i=1}^\infty A_i$  (with $\Psi(f|_X)=\Psi'(f)$ for all 
$f\in C(\Omega)$).

Let $\{\zeta_1, \zeta_2,...,\zeta_m\}\subset X$ be 
an $\ep_1$-dense
subset.
Put $Q= B/\bigoplus_{i=1}^\infty A_i.$
Note that $B=M(\bigoplus_{i=1}^\infty A_i),$ the multiplier algebra of $\bigoplus_{i=1}^\infty A_i.$
So $Q=M(\bigoplus_{i=1}^\infty A_i)/\bigoplus_{i=1}^\infty A_i$ is a corona algebra. 
 By applying Lemma \ref{Lcorona}, we obtain mutually orthogonal non-zero projections 
 $d_1, d_2,...,d_m$ with $d=\sum_{k=1}^m d_k$ such that 
 \beq
 &&d\Psi(f)=\Psi(f)d=\sum_{k=1}^m f(\zeta_k)d_k\andeqn\\
 &&\Psi(f)=\sum_{k=1}^m f(\zeta_k)d_k
 +(1-d)\Psi(f)(1-d)\rforal f\in C(X).
 \eneq
 Note that every projection in $\prod_{i=1}^\infty A_i/\oplus_{i=1}^\infty A_i$ 
 lifts to a projection in $\prod_{i=1}^\infty A_i.$ 
 So there is a projection $p_1\in B$ such that $\Pi(p_1)=e_1.$
 An induction let us find mutually 
 orthogonal non-zero projections $p_1, p_2,...,p_m$ in $B$  (and $p=\sum_{k=1}^m p_i$) such that
 $\Pi(p_k)=d_k$ ($1\le k\le m$) and
 \beq
 &&\hspace{-0.6in}\Pi(\sum_{k=1}^m f(\zeta_k)p_k+(1-p)\Phi(f)(1-p))\\
 &&\hspace{0.4in}=\sum_{k=1}^m f(\zeta_k)d_k
 +(1-d)\Psi(f)(1-d)=\Psi(f)\hspace{0.2in}\rforal f\in C(X).
 \eneq
Write $p_k=\{p_{i,k}\}_{i\in \N}$ and $p=\{q_{i}\}_{i\in \N}$ in $B.$ 
Then, for all sufficiently large $i,$
\beq
\|\sum_{k=1}^m g(\zeta_k)p_{i,k}+(1-q_i)L_i(g|_{X_i})(1-q_i)-L_i(g|_{X_i})\|<\ep_0/2\rforal g\in {\cal G}.
\eneq
 This contradict \eqref{LLd-5}. The lemma then follows.
\end{proof}

\begin{thm}\label{T1}
Let $n\in \N$ and $\ep>0.$  There exists $\dt(n, \ep)>0$ satisfying the following:
Suppose that $H$ is an infinite dimensional separable Hilbert space and $T_1, T_2,...,T_n\in B(H)$
are self-adjoint operators with $\|T_j\|\le 1$ ($1\le j\le n$) such that
\beq\label{T1-c}
\|T_iT_j-T_jT_i\|<\dt,\,\,\,\, 1\le i,j\le n.
\eneq
Then, for any $\lambda=(\lambda_1, \lambda_2,...,\lambda_n)\in s{\rm Sp}^{\ep/4}((T_1,T_2,...,T_n)),$
there exists $v\in H$ with $\|v\|=1$ such that
\beq
\max_{1\le i\le n}|{\rm exp}_{T_i}(v)-\lambda_i|<\ep\andeqn
\max_{1\le i\le n}\|(T_i-{\rm exp}_{T_i}(v)\cdot I)v\|<\ep.
\eneq
Moreover, for any $\mu=(\mu_1, \mu_2,...,\mu_n)\in {\rm conv}(s{\rm Sp}^{\ep/4}((T_1,T_2,...,T_n))),$ 
there is a vector $x\in H$ with $\|x\|=1$ such that
\beq\label{T1-c2}
(\sum_{i=1}^n|\mu_i-{\rm exp}_{T_i}(x)|^2)^{1/2}<\ep,\,\,\, 1\le i\le n.
\eneq
\end{thm}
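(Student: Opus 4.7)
The approach is to reduce the problem to a finite-dimensional ``diagonalization'' of the c.p.c.\ map attached to a near-spectrum. Choosing the hypothesis constant $\dt$ sufficiently small, Proposition \ref{Pappsp} provides, for an auxiliary $\dt_1 > 0$ to be specified, a nonempty near-spectrum $X := nSp^{\dt_1}((T_1,\ldots,T_n))$ together with a unital c.p.c.\ map $L : C(X) \to B(H)$ satisfying $\|L(e_j|_X) - T_j\| < \dt_1$ and $\|L((e_ie_j)|_X) - L(e_i|_X) L(e_j|_X)\| < \dt_1$. Proposition \ref{Pspuniq}, with $\dt_1$ small relative to $\ep/4$, then yields $s{\rm Sp}^{\ep/4}((T_1,\ldots,T_n)) \subset X_{\ep/2}$ in the Hausdorff sense.

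Since $B(H)$ has real rank zero (any self-adjoint operator is a norm-limit of self-adjoints with finite spectrum by the spectral theorem), Lemma \ref{Ldig} applies to $L$ with a small tolerance $\ep' > 0$ that also serves as the density parameter. This produces mutually orthogonal non-zero projections $p_1,\ldots,p_m \in B(H)$, with $p := \sum_k p_k$, and points $\xi_1,\ldots,\xi_m \in X$ which are $\ep'$-dense in $X$, satisfying, for every generator $e_j|_X$,
\[
\Big\|\sum_{k=1}^m \xi_{k,j}\, p_k + (1-p)T_j(1-p) - T_j\Big\| < \ep' + \dt_1.
\]

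For the first assertion, given $\lambda \in s{\rm Sp}^{\ep/4}$, the containment $s{\rm Sp}^{\ep/4} \subset X_{\ep/2}$ together with $\ep'$-density gives some $\xi_{k_0}$ with $\|\xi_{k_0} - \lambda\|_2 < \ep/2 + \ep'$. Pick a unit vector $v \in {\rm range}(p_{k_0})$; then $(1-p)v = 0$ and $p_\ell v = \delta_{\ell k_0} v$, so the displayed inequality applied to $v$ yields $\|T_j v - \xi_{k_0,j} v\| < \ep' + \dt_1$. It then follows directly that $|{\rm exp}_{T_j}(v) - \lambda_j| < \ep/2 + 2\ep' + \dt_1$ and $\|(T_j - {\rm exp}_{T_j}(v) I) v\| < 2(\ep' + \dt_1)$, both $< \ep$ for small $\ep', \dt_1$.

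For the moreover clause, use Carath\'eodory to write $\mu = \sum_i t_i \lambda^{(i)}$ with $\lambda^{(i)} \in s{\rm Sp}^{\ep/4}$, $t_i \ge 0$, $\sum t_i = 1$, and for each $i$ choose $\xi_{k(i)}$ with $\|\lambda^{(i)} - \xi_{k(i)}\|_2 < \ep/2 + \ep'$; setting $c_k = \sum_{i: k(i)=k} t_i$ gives $\|\mu - \sum_k c_k \xi_k\|_2 < \ep/2 + \ep'$. For each $k$ with $c_k > 0$ pick a unit vector $w_k \in {\rm range}(p_k)$; these are orthonormal because $p_k p_\ell = 0$ for $k \ne \ell$, so $x := \sum_k \sqrt{c_k}\, w_k$ is a unit vector with $px = x$. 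A direct calculation gives $\langle \sum_\ell \xi_{\ell,j} p_\ell x, x \rangle = \sum_\ell c_\ell \xi_{\ell,j}$, hence $|{\rm exp}_{T_j}(x) - \sum_\ell c_\ell \xi_{\ell,j}| < \ep' + \dt_1$, and combining the two approximations yields
\[
\Big(\sum_{j=1}^n |\mu_j - {\rm exp}_{T_j}(x)|^2\Big)^{1/2} \le \ep/2 + \ep' + \sqrt{n}\,(\ep' + \dt_1) < \ep
\]
provided $\ep'(1+\sqrt{n}) + \sqrt{n}\,\dt_1 < \ep/2$. The main obstacle is the cascade of nested tolerance choices: $\dt$ must force $\dt_1$ small enough to both trigger Proposition \ref{Pspuniq}'s Hausdorff containment with $\eta = \ep/4$ and permit Lemma \ref{Ldig} to be applied with $\ep' \ll \ep/(1+\sqrt{n})$, so that the $\sqrt{n}$-factor arising in the $\ell^2$-bound of the moreover clause is absorbed.
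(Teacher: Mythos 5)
Your proposal is correct and follows essentially the same route as the paper: obtain a near-spectrum $X$ and c.p.c.\ map via Proposition \ref{Pappsp}, use Proposition \ref{Pspuniq} to push $s{\rm Sp}^{\ep/4}$ inside a small thickening of $X$, invoke Lemma \ref{Ldig} (justified because $B(H)$ has real rank zero) to obtain mutually orthogonal projections that approximately diagonalize the $T_j$'s over a dense finite subset of $X$, and then pick unit vectors in the ranges of the projections---a single one for the first assertion, a weighted orthogonal combination $x = \sum_k \sqrt{c_k}\,w_k$ for the convex-hull assertion. The key computational observation in both arguments is that the component errors $T_j v_k - \xi_{k,j} v_k$ all arise from the action of a \emph{single} operator $\sum_k e_j(\xi_k) p_k + (1-p)\,\cdot\,(1-p) - T_j$ of small norm, so no spurious factor of $\sqrt{m}$ enters when passing to $x$. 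One point where you are actually slightly more careful than the paper: you explicitly track the factor $\sqrt{n}$ that appears when converting the per-coordinate bound $|\exp_{T_j}(x) - \sum_\ell c_\ell \xi_{\ell,j}| < \ep' + \dt_1$ into the Euclidean estimate of \eqref{T1-c2}, and you correctly note that the constraint $\ep'(1+\sqrt{n}) + \sqrt{n}\,\dt_1 < \ep/2$ forces the tolerance cascade to depend on $n$; this is permissible since $\dt = \dt(n,\ep)$, but the paper's final computation elides it. Two minor notational remarks: swapping $(1-p)\phi(e_j|_X)(1-p)$ for $(1-p)T_j(1-p)$ costs $2\dt_1$ rather than $\dt_1$ (immaterial), and your appeal to Carath\'eodory is unnecessary---any point of a convex hull of a set in $\R^n$ is already a finite convex combination---but also harmless.
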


\begin{proof}
We choose the generating set ${\cal G}=\{e_j: 0\le j\le n\}$ of $C(\I^n)$ (see   Definition \ref{DSP}).
Fix $0<\ep<1.$   Choose $\eta_1:=\dt(\ep/16, n)>0$ given by Lemma \ref{Ldig} 
for $\ep/16$ (in place of $\ep$), $\Omega=\I^n$ and ${\cal G}$ chosen above.
Put $\eta=\min\{\ep/4, \eta_1\}.$
Let $\dt_2:=\dt(\eta/4, n)>0$ be given by Proposition \ref{Pappsp} (see Proposition 
 2.11 of \cite{Linself})
for $\eta/4$ (in place of $\eta$).  Let $\dt_3:=\dt(n, \dt_2)>0$ 
be given by Proposition \ref{Pspuniq} 
(see Proposition 2.15 of \cite{Linself}) for $\eta/4$ (in place of $\eta$).
Put $\dt=\min\{\ep/4, \eta/4, \dt_2/2, \dt_3\}>0.$

Now assume that \eqref{T1-c} holds for the above $\dt.$ 
By Proposition \ref{Pappsp} (Proposition 2.11 of \cite{Linself}) and by the choice of $\dt$ (and  $\dt_2$),
\beq\label{MT-4}
X:=n{\rm Sp}^{\eta/4}((T_1,T_2,...,T_n))\not=\emptyset\andeqn 
Z:=s{\rm Sp}^{\eta/4}((T_1, T_2,...,T_n))\not=\emptyset.
\eneq
Applying Proposition \ref{Pspuniq},
we also have 
\beq\label{MT-5}
X\subset Z\subset X_{\eta/2}\subset \I^n.
\eneq
By the definition of $\eta/4$-near spectrum, we obtain a unital c.p.c. map
$\phi: C(X)\to A$ such that
\beq
&&\|\phi(e_j|_X)-T_j\|<\eta/4,\,\,\, 1\le j\le n，\\
&&\|\phi(e_ie_j|_X)-\phi(e_i|_X)\phi(e_j|_X)\|<\eta/4,\,\,\, 1\le i,j\le n\andeqn\\
&&\|\phi(f)\|\ge 1-\eta/4
\eneq
for all $f\in C(X)_+$ which has value 1 on an open ball of radius $\eta/4$ with center $x\in X.$
By the choice of $\eta,$ applying  Lemma \ref{Ldig},
we obtain an $\ep/4$-dense subset $\{\xi_1, \xi_2,...,\xi_m\}\subset X$ and 
mutually orthogonal non-zero projections $p_1, p_2,...,p_n\in B(H)$ such that
\beq\label{MT-10}
\|\sum_{k=1}^m e_j(\xi_k) p_k+(1-p)\phi(e_j|_X)(1-p)-T_j\|<\ep/4,\,\,\, 1\le j\le n.
\eneq 

Next let $\lambda=(\lambda_1, \lambda_2,...,\lambda_n)\in s{\rm Sp}^{\ep/4}((T_1, T_2,...,T_n))=Z.$ 
Then, by \eqref{MT-5},  there is $k\in \{1,2,...,m\}$
such that
\beq
\sqrt{\sum_{i=1}^n|\lambda_i-\xi_{k,i}|}<\eta/4<\ep/4,
\eneq
where $(\xi_{k,1}, \xi_{k,2},...,\xi_{k,n})=\xi_k.$

Choose $v_k\in p_k(H)$ with $\|v_k\|=1.$ 
Then, by \eqref{MT-10},
\beq
|{\rm exp}_{T_j}(v_k)-\xi_{k,j}|&=&|\la T_jv_k,v_k\ra -\xi_{k,j}|<\ep/4+|\sum_{i=1}^m \la e_j(\xi_i)p_iv_k,v_k\ra -\xi_{k,j}|\\\label{MT-11}
&=&\ep/4+|\xi_{k,j}\la p_kv_k,v_k\ra-\xi_{k,j}|=\ep/4.
\eneq
Hence
\beq
|{\rm ep}_{T_j}(v_k)-\lambda_{k,j}|<\ep/4+|\xi_{k,j}-\lambda_{k,j}|<\ep/4+\ep/4=\ep/2<\ep.
\eneq
Therefore, for each  $1\le j\le n,$ by \eqref{MT-10}, 
\beq\nonumber
\|(T_j-\xi_{k,j}\cdot I)v_k\| &<&
\ep/4+\|(\sum_{i=1}^m e_j(\xi_i) p_i+(1-p)\phi(e_j|_X)(1-p))v_k-\xi_{k,j}v_k\|\\
&=&\ep/4+\|(\sum_{i=1}^m e_j(\xi_i)p_i)v_k-\xi_{k,j}v_k\|\\\label{MT-16}
&=&\ep/4+\|\xi_{k,j}v_k-\xi_{k,j}v_k\|=\ep/4.
\eneq
Hence,  by \eqref{MT-11},
\beq\nonumber
\|(T_j-{\rm exp}_{T_j}(v_k)\cdot I)v_k\| &<&
\ep/4+\|(T_j-\xi_{k,j}\cdot I)v_k\|<\ep/4+\ep/4=\ep/2.
\eneq
Next, let $\mu=\sum_{l=1}^Na_l \zeta_l,$ where $a_l\ge 0$ and 
$\sum_{l=1}^Na_l=1$ and $\zeta_l\in s{\rm Sp}^{\ep/4}((T_1, T_2,...,T_n)).$
Since $\{\xi_1, \xi_2,...,\xi_m\}$ is $\ep/4$-dense, 
we may assume that there are $\af_i\ge 0$ with 
$\sum_{i=1}^m \af_i=1$ such that
\beq\label{MT-20}
\|\mu-\sum_{i=1}^m \af_i\xi_k\|_2<\ep/4,
\eneq
where $\|\cdot\|_2$ is the Euclidian norm in $\R^n.$
Choose 
$
x=\sum_{k=1}^m \sqrt{\af_k}v_k\in H.
$
Since $v_i\perp v_j,$ when $i\not=j,$ we have 
$\|x\|^2=\la x, x\ra=\sum_{k=1}^m\af_k\|v_k\|^2=1.$
We then compute that, by \eqref{MT-16} and \eqref{MT-20},
\beq
{\rm exp}_{T_j}(x)=\la T_j x, x\ra=\sum_{k=1}^m \sqrt{\af_k}\la T_jv_k, x\ra
\approx_{\ep/4}  \sum_{k=1}^m\sqrt{\af_k}\la \xi_{k,j}v_k,x\ra\\
=\sum_{k=1}^m \af_k\xi_{k,i}\la v_k, v_k\ra=\sum_{k=1}^m \af_k\xi_{k,i}\approx_{\ep/4}\mu_i.
\eneq
Thus \eqref{T1-c2} holds.
\end{proof}

Theorem \ref{IT-1} follows immediately from Theorem \ref{T1}.

\section{The case of compact commutators}

\begin{thm}\label{TC+ep}
Let $\ep>0$ and $n\in \N.$ 
There is $\dt(\ep,n)>0$ satisfying the following:
Suppose that $H$ is an infinite dimensional separable Hilbert space.
Suppose that $T_1, T_2,...,T_n$ are in $B(H)_{s.a.}$ with $\|T_i\|\le 1$ ($1\le i\le n$) such that
\beq\label{TC+ep-0}
\|\pi_c(T_iT_j)-\pi_c(T_iT_j)\|<\dt,\,\,\, 1\le i,j\le n.
\eneq
Then, for any $\lambda=(\lambda_1, \lambda_2,...,\lambda_n)\in 
s{\rm Sp}^{\ep/4}((\pi_c(T_1),\pi_c(T_2),...,\pi_c(T_n))),$
there exists $v\in H$ with $\|v\|=1$ such that
\beq\label{LT-01}
\max_{1\le i\le n}|{\rm exp}_{T_i}(v)-\lambda_i|<\ep\tand
\max_{1\le i\le n}\|(T_i-{\rm exp}_{T_i}(v)\cdot I)v\|<\ep.
\eneq
Moreover, for any $\mu=(\mu_1, \mu_2,...,\mu_n)\in {\rm conv}(s{\rm Sp}^{\ep/4}((\pi_c(T_1),\pi_c(T_2),...,\pi_c(T_n))),$
there exists a vector $x\in H$ with $\|x\|=1$ such that
\beq
(\sum_{i=1}^n |\mu_i-{\rm exp}_{T_j}(x)|^2)^{1/2}<\ep
\eneq
\end{thm}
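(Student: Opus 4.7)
The plan is to run the argument of Theorem \ref{T1} inside the Calkin algebra $B(H)/\mathcal{K}$ applied to the tuple $(\pi_c(T_1), \ldots, \pi_c(T_n))$, and then to lift the resulting data back to $B(H)$ while absorbing the unavoidable compact-operator error. The structural facts I rely on are that $B(H)/\mathcal{K}$ is a unital \CA\, of real rank zero (so Lemma \ref{Ldig} applies to it) and that every non-zero projection in $B(H)/\mathcal{K}$ lifts to an infinite-dimensional projection in $B(H)$.

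First, fix $\epsilon > 0$ and choose $\eta$, $\delta_2$, $\delta_3$, $\delta$ exactly as in the proof of Theorem \ref{T1}. By \eqref{TC+ep-0}, the tuple $(\pi_c(T_1), \ldots, \pi_c(T_n))$ is almost commuting in $B(H)/\mathcal{K}$ with self-adjoint entries of norm at most $1$. Proposition \ref{Pappsp} yields a non-empty compact set $X := n\mathrm{Sp}^{\eta/4}((\pi_c(T_1),\ldots,\pi_c(T_n))) \subset \I^n$ and a unital c.p.c.\ map $\phi : C(X) \to B(H)/\mathcal{K}$ satisfying the conditions of Definition \ref{Dappsp} at level $\eta/4$; Proposition \ref{Pspuniq} then gives $X \subset s\mathrm{Sp}^{\eta/4}((\pi_c(T_1),\ldots,\pi_c(T_n))) \subset X_{\eta/2}$. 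Applying Lemma \ref{Ldig} to $\phi$ produces an $\epsilon/4$-dense set $\{\xi_1,\ldots,\xi_m\} \subset X$ and mutually orthogonal non-zero projections $\bar{p}_1,\ldots,\bar{p}_m \in B(H)/\mathcal{K}$, with $\bar{p} := \sum_k \bar{p}_k$, satisfying
\[
\Bigl\|\sum_{k=1}^m e_j(\xi_k)\bar{p}_k + (1-\bar{p})\phi(e_j|_X)(1-\bar{p}) - \pi_c(T_j)\Bigr\| < \epsilon/16, \quad 1\le j\le n.
\]

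Next I lift the $\bar{p}_k$ inductively to mutually orthogonal projections $p_1,\ldots,p_m$ in $B(H)$: at the $k$th step $\bar{p}_k$ lives in the corner $(1-\sum_{i<k}\bar{p}_i)(B(H)/\mathcal{K})(1-\sum_{i<k}\bar{p}_i)$, which is a Calkin-type quotient of $(1-\sum_{i<k}p_i)B(H)(1-\sum_{i<k}p_i)$, and projections lift from that quotient. Each $p_k$ is infinite-dimensional because $\bar{p}_k \ne 0$. Writing $p := \sum_k p_k$ and $B_j := \sum_k e_j(\xi_k) p_k + (1-p) T_j (1-p)$, the displayed inequality supplies compact operators $K_1,\ldots,K_n$ with $\|B_j - T_j - K_j\| < \epsilon/16$ for $1 \le j \le n$.

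The main obstacle is controlling the $K_j$ on unit vectors drawn from $p_k(H)$, since a priori they need not be small there. I overcome this by exploiting the infinite-dimensionality of each $p_k(H)$: the compression $p_k K_j^* K_j p_k$ is compact and positive, its eigenvalues accumulate only at $0$, and so $p_k(H)$ contains finite-codimensional subspaces on which $\|K_j\|$ is arbitrarily small. Since only finitely many pairs $(j,k)$ arise, I can pick unit vectors $v_k \in p_k(H)$ with $\|K_j v_k\| < \epsilon/(16\sqrt{m})$ for all $j$ and $k$ at once. With these $v_k$'s the computation \eqref{MT-11}--\eqref{MT-16} carries over verbatim (the extra $\|K_j v_k\|$ being absorbed into the estimates), yielding \eqref{LT-01} with $v := v_k$ for the index $k$ whose $\xi_k$ is close to $\lambda$. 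For the convex-combination part, $\epsilon/4$-density of $\{\xi_k\}$ lets me approximate $\mu$ within $\epsilon/4$ by $\sum_k \alpha_k \xi_k$ with $\alpha_k \ge 0$ and $\sum_k \alpha_k = 1$; setting $x := \sum_k \sqrt{\alpha_k}\, v_k$, orthogonality of the $v_k$'s gives $\|x\| = 1$, while Cauchy--Schwarz gives $\|K_j x\| \le \bigl(\sum_k \|K_j v_k\|^2\bigr)^{1/2} < \epsilon/16$, so the compact contribution remains negligible and the expected-value calculation at the end of the proof of Theorem \ref{T1} transfers without change.
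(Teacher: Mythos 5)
Your proof is correct, and it takes a genuinely different route from the paper's. The paper's proof first invokes Corollary 1.20 of \cite{Linalm97} (an almost commuting $n$-tuple of self-adjoint elements in a purely infinite simple $C^*$-algebra is norm-close to an exactly commuting tuple) to replace $\pi_c(T_1),\ldots,\pi_c(T_n)$ by commuting elements $s_1,\ldots,s_n$ in the Calkin algebra. This yields a unital injective homomorphism $\psi : C(X)\to B(H)/\mathcal K$, to which Lemma \ref{Lcorona} is applied directly to extract projections $d_k$ obeying the \emph{exact} identity $\psi(f)=\sum_k f(\zeta_k)d_k+(1-d)\psi(f)(1-d)$. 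You bypass Lin's almost-commuting theorem entirely and instead apply Lemma \ref{Ldig} to the almost-multiplicative c.p.c.\ map $\phi$ coming from the near-spectrum --- legitimate because the Calkin algebra has real rank zero --- which gives only an approximate decomposition, but that is all the lifting step requires. Your treatment of the compact error is also slightly different: you cut each $p_k(H)$ along the spectral projections of $p_kK_j^*K_jp_k$ to locate, for each $k$, a finite-codimensional subspace of $p_k(H)$ on which every $K_j$ has small norm; the paper instead fixes one index $k$ of a projection-valued approximate identity $\{E_k\}$ for $\mathcal K$ adapted to the decomposition $p_1\oplus\cdots\oplus p_m\oplus(1-p)$ and picks $v_j\in(p_j-E_{k,j})H$. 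These are the same idea packaged globally versus locally. What your route buys is self-containment: it dispenses with the structural fact about almost-commuting elements in purely infinite simple $C^*$-algebras (a nontrivial external input), at the cost of carrying approximate rather than exact decompositions through the argument. What the paper's route buys is that the Calkin-level data are exact (a genuine $*$-homomorphism and an exact projection decomposition), which makes the lifting step cleaner; both versions land on the same estimates up to constant bookkeeping.
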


\begin{proof}
Let $\dt_0>0$ (in place of $\dt$) be given by Proposition \ref{Pappsp} 
(see Proposition 2.11 of \cite{Linself}) for $\ep/4$ (in place of $\eta$).
Let $\dt_1>0$ (in place of $\dt$) be given by Proposition \ref{Pspuniq} (Proposition 2.15 of \cite{Linself})
for $\ep/16$ (in place of $\eta$).
Put $\eta=\min\{\dt_1/2, \dt_0/2, \ep/32\}>0.$ 
Choose $\dt_2>0$ (in place of $\dt$) be given by Corollary 1.20 of \cite{Linalm97}  for $\eta$ (in place of $\ep$).
Put $\dt=\min\{\dt_0, \dt_1, \dt_2\}.$

Suppose that \eqref{TC+ep-0} holds for this $\dt.$
 Recall that $B(H)/{\cal K}$
is a purely infinite simple \CA. Then, by applying Corollary 1.20  of \cite{Linalm97},
we obtain $s_1, s_2,...,s_n\in (B(H)/{\cal K})_{s.a.}$ such that
\beq\label{Tcp+ep+5}
s_is_j=s_js_i\andeqn \|s_i-\pi_c(T_i)\|<\eta,\,\,\, 1\le i, j\le n.
\eneq
By the choice of $\eta,$  applying  Proposition \ref{Pappsp},  we conclude that
$Z:=s{\rm Sp}^{\ep/4}((\pi_c(T_1),\pi_c(T_2),...,\pi_c(T_n)))\not=\emptyset.$ 
Let $C\subset B(H)/{\cal K}$ be the unital \SCA\, generated by $s_1,s_2,...,s_n,1.$
Then there exists a unital injective \hm\, $\psi: C(X)\to B(H)/{\cal K}$
for some compact subset $X\subset \I^n$ such that
$\psi(e_i|_X)=s_i,$ $1\le i\le n.$
Then $X$ is an $\eta$-spectrum for $(\pi_c(T_1), \pi_c(T_2),...,\pi_c(T_n)).$ 

By Proposition \ref{Pspuniq} (with our choice of $\dt_1$),
\beq
{\rm dist}_H(X, Z)<2(\ep/16)=\ep/8.
\eneq
Choose an $\ep/16$-dense subset $\{\zeta_k=(\zeta_{k,1},\zeta_{k,2},...,\zeta_{k,n}): 1\le k\le m\}$
of $X.$ 
By applying Lemma \ref{Lcorona},
there are mutually orthogonal non-zero projections 
$d_1, d_2,...,d_m$ with $d=\sum_{j=1}^md_j$ such that
\beq
(1-p)\psi(f)=\psi(f)(1-p)\andeqn 
\psi(f)=\sum_{j=1}^m f(\zeta_j)d_k+(1-p)\psi(f)(1-p)
\eneq
for all $f\in C(X).$ By \eqref{Tcp+ep+5}, there are $K_i\in {\cal K}$ ($1\le i\le n$) 
and mutually orthogonal projections $p_1, p_2,...,p_m$
such that $\pi_c(p_j)=d_j$ ($1\le j\le m$) and 
\beq\label{475}
\| \sum_{j=1}^m e_i(\zeta_j)p_j+(1-p)T_i(1-p)+K_i-T_i\|<\eta,\,\,\, 1\le i\le n.
\eneq
 Let $\{E_{k,0}\}$ be an approximate identity of $(1-p){\cal K}(1-p)$
and $\{E_{k,j}\}$ be an approximate identity of $p_j{\cal K}p_j$ ($1\le j\le m$) consisting of projections
($0\le j\le m$), respectively.
Put $E_k=E_{k,0}+\sum_{j=1}^m E_{k,j},$ $k\in \N.$
Then $\{E_k\}$ forms an approximate identity for ${\cal K}.$
We may choose $E_k$ such that
\beq\label{T2-12}
\|(1-E_k)K_i\|<\ep/8\andeqn \|K_i(1-E_k)\|<\ep/8,\,\,\, i=1,2,...,n.
\eneq
Note that 
\beq\label{T2-13}
p_j-E_{k,j}=p_j(1-E_k)\le 1-E_k,\,\,\, 1\le j\le m.
\eneq
So we also have (by \eqref{475} and \eqref{T2-12}), for $1\le i\le n,$ 
\beq\label{T2-12+}
&&\hspace{-0.8in}\|\sum_{j=1}^m e_i(\zeta_j)(p_j-E_{k,j})+(1-p)T_i(1-p-E_{k,0})-T_i(1-E_k)\|<\eta+{\ep\over{8}}\andeqn\\
&&\hspace{-0.8in}\|\sum_{j=1}^m e_i(\zeta_j)(p_j-E_{k,j})+(1-p-E_{k,0})T_i(1-p-E_{k,0})-(1-E_k)T_i(1-E_k)\|<\eta+{\ep\over{8}}.
\eneq
Choose $v_j\in (p_j-E_{k,j})H$ with $\|v_j\|=1,$ $j=1,2,...,m.$
Then $v_k\perp v_j$ if $k\not=j.$ 
Hence, by \eqref{T2-13} and by \eqref{T2-12+} (and since $\eta+\ep/8<5\ep/32$)
\beq\label{T2-18}
T_iv_j\approx_{{5\ep\over{32}}} \sum_{j=1}^m e_i(\zeta_j)(p_j-E_{k,j})v_j=\zeta_{j,i}v_j.
\eneq

Next  let $\lambda=(\lambda_1,\lambda_2,...,\lambda_n)\in Z$ be given.
We may assume that, for some $\zeta_j,$  
\beq
(\sum_{i=1}^m |\zeta_{j,i}-\lambda_i|^2)^{1/2}<\ep/8+\ep/16=3\ep/16.
\eneq
By  \eqref{T2-18}, 
we estimate that
\beq\label{T2-19}
(\sum_{i=1}^n|{\rm exp}_{T_i}(v_j)-\lambda_i|^2)^{1/2}<
3\ep/16+
(\sum_{i=1}^n|{\rm exp}_{T_i}(v_j)-\zeta_{j,i}|^2)^{1/2}
<3\ep/16+5\ep/32.
\eneq
Moreover, by \eqref{T2-18}, for $1\le i\le n,$
\beq
\|(T_i-{\rm exp}_{T_i}(v_j)\cdot I)v_j\|<5\ep/32+\|\zeta_{j,i}v_j-{\rm exp}_{T_i}(v_j)v_j\|<\ep/2.
\eneq

Now let $\mu=\sum_{l=1}^Na_l \zeta_l,$ where $a_l\ge 0$ and 
$\sum_{l=1}^Na_l=1$ and $\zeta_l\in s{\rm Sp}^{\ep/4}((T_1, T_2,...,T_n)),$ $1\le l\le n.$
Since $\{\zeta_1, \zeta_2,...,\zeta_m\}$ is $\ep/16$-dense, 
we may assume that there are $\af_i\ge 0$ with 
$\sum_{i=1}^m \af_i=1$ such that
\beq
\|\mu-\sum_{i=1}^m \af_i\zeta_k\|_2<\ep/16,
\eneq
where $\|\cdot\|_2$ is the Euclidian norm in $\R^n.$
Choose 
$
x=\sum_{k=1}^m \sqrt{\af_k}v_k\in H.
$
Since $v_i\perp v_j,$ when $i\not=j,$ we have 
$\|x\|^2=\la x, x\ra=\sum_{k=1}^m\af_k\|v_k\|^2=1.$
We then compute that, by \eqref{T2-18} and 
\beq
{\rm exp}_{T_j}(x)=\la T_j x, x\ra=\sum_{k=1}^m \sqrt{\af_k}\la T_j v_k, x\ra \approx_{5\ep/8}  \sum_{k=1}^m\sqrt{\af_k}\la \zeta_{k,j} v_k,x\ra\\
=\sum_{k=1}^m \af_k\zeta_{k,i}\la v_k, v_k\ra=\sum_{k=1}^m \af_k\zeta_{k,i}\approx_{\ep/16}\mu_i.
\eneq
\end{proof}

{\bf The proof of Theorem \ref{IT2}}:
The condition that $T_iT_j-T_jT_i\in {\cal K}$ in \eqref{T1-c} means
$\|\pi_c(T_i)\pi_c(T_j)-\pi_c(T_j)\pi_c(T_i)\|=\dt$ for any $\dt>0.$  
Moreover ${\rm Sp}((T_1,T_2,...,T_n))\subset s{\rm Sp}^{\ep/4}((T_1, T_2,...,T_n))$
for any $\ep>0.$ Hence, by Theorem \ref{TC+ep}, 
\eqref{LT-01} holds for any $\ep>0.$
Hence  \eqref{IT2-1}, \eqref{IT2-2} and \eqref{IT2-3} all hold.

\vspace{0.2in}

\begin{rem}\label{catdiscussion}
{\rm  Let $H$ be an infinite dimensional separable Hilbert space and 
$T_1, T_2,...,T_n\in B(H)$ be self-adjoint operators.

(1) Even in a classical system, a system in which the observables (self-adjoint 
operators $T_1, T_2,...,T_n$)  commute,
not every  $n$-tuple $\lambda=(\lambda_1, \lambda_2,...,\lambda_n),$ where 
each $\lambda_i$ is in the spectrum of $T_i,$ can be measured simultaneously.
The requirement is that $\lambda\in {\rm Sp}((T_1,T_2, ...,T_n))$ (see Definition \ref{Dsp1}). 
It is well known that spectrum of an operator is difficult to compute and unstable for 
small perturbations. 
When observables are not commuting,
it is  even difficult to define ``joint-spectrum". 
 However, for $\ep>0,$ $s{\rm Sp}^\ep((T_1, T_2,...,T_n))$ can be tested (see \eqref{Test}), and 
by Proposition \ref{Pappsp},  it is not empty, when commutators are small.
So the $\ep$-synthetic-spectrum $s{\rm Sp}((T_1, T_2,..,T_n))$ is a workable 
replacement.

(2) Let $s_1$ and $s_2$  be AMU states corresponding to joint 
expected values $\lambda=(\lambda_1, \lambda_2,...,\lambda_n)$ and 
$\mu=(\mu_1,\mu_2,...,\mu_n),$  respectively.
Then the superposition  $s={s_1+s_2\over{\|s_1+s_2\|}}$ gives 
${\rm exp}_{T_i}(s)={\lambda_i+\mu_i\over{\|s_1+s_2\|}},$ $i=1,2,...,n.$ 
In fact, for any finitely many AMU states (or any states) $s_k: 1\le k\le m $ 
with joint expected values $\zeta_k=(\zeta_{k,1}, \zeta_{k,2},...,\zeta_{k,n})\in  \R^n,$ 
any $\af_k\ge 0$ ($1\le k\le m$) with $\sum_{k=1}^m \af_k=1,$ 
 $s=\sum_{k=1}^m \af_i s_i$ is a state 
with joint expected value 
$\zeta=\sum_{k=1}^m \af_k\zeta_k.$  
This mixed state measures $\sum_{k=1}^m \af_k \zeta_{k,j},$ $1\le j\le n,$ 
simultaneously. 
 However, 
if $\zeta \in s{\rm Sp}^{\eta}((T_1, T_2,...,T_n))$ for some $\eta>0,$ it is not clear that 
$s$ should be called a  cat-state (as  $\zeta$ could be an eigenvalue).

(3) Let us assume that $\zeta\not\in s{\rm Sp}^{\eta}((T_1, T_2,...,T_n))$ for some small $\eta>0.$
This cat-state $s=\sum_{k=1}^m \af_i s_i$ is never  a pure state as pure states correspond to extremal 
states. 
The question is: 

\hspace{0.1in}

Could $\zeta$ be a joint expected value
measured by a vector state (simultaneously)?

\hspace{0.1in}

The ``Moreover" part of 
Theorem \ref{T1} (see also  that of Theorem \ref{TC+ep}) states that $\zeta$ could be approximately measured simultaneously 
by a vector state (but not by a AMU state).   This is an interesting feature since the $n$-tuple may  not
be approximated by  commuting ones.

(4) Under the assumption \eqref{IT2-0}, the commutators could have a large norm.
However, \eqref{compct-1} means that the commutators tend to zero along any orthonormal basis 
of $H.$ In fact the commutators vanish in the corona algebra $B(H)/{\cal K}$ (the standard quantum boundary).
 It is important to note that the assumption \eqref{IT2-0}
does not give us a compact perturbation of some $n$-tuple commuting self-adjoint operators.
More precisely, by the Brown-Douglas-Fillmore Theory,  there {\it cannot} be any 
self-adjoint operators $S_1,S_2,...,S_n$ with $S_jS_i=S_iS_j$ such that
\beq
T_j \,\,\,{\rm is \,\, \, close\,\, to}\,\,\,S_j+K_j,\,\,\, 1\le j\le n
\eneq
for some compact operators $K_j$ unless a Fredholm index vanishes (see, also Proposition 5.5 of \cite{Linself}) .
However, Theorem \ref{IT2} shows that there are plenty of AMU states.

Theorem \ref{TC+ep} goes somewhat beyond  both Theorem \ref{IT-1} and \ref{IT2} even though 
the the set of joint expected values given by AMU states might be smaller.

\begin{exm}\label{FRem}
Let $S$ be the shift operator on $H=l^2.$
Put $A_1=(S+S^*)/2$ and $A_2=-(S-S^*)/2i.$
Then $A_1$ and $A_2$ are selfadjoint operators and 
$\|A_1A_2-A_2A_1\|=1.$ But $A_1A_2-A_2A_1\in {\cal K}.$
However the pair $(A_1,A_2)$ is far away from commuting pairs and 
not even a compact perturbation of commuting pairs as $S$ is not a compact perturbation 
of any unitary. 
Nevertheless, by Theorem \ref{IT2},  for any $\lambda=(\lambda_1, \lambda_2)$ with $|\lambda|=1,$
there are unit vectors $v_k\in H$ ($k\in \N$) such that
\beq
\lim_{k\to \infty}\max_{1\le i\le 2}|\lambda_j-{\rm exp}_{A_j}(v_n)|=0\andeqn
\lim_{k\to\infty}\| (A_j-{\rm exp}_{A_j}(v_n)\cdot I)v_k\|=0.
\eneq
Moreover, for any $\zeta=(\zeta_1, \zeta_2)\in \R^2$ with $|\zeta|\le 1,$ there exists a sequence 
of unit vectors $u_m\in H$ such that
\beq
\lim_{k\to\infty}\max_{1\le i\le 2}|\zeta_j-\la A_j u_m, u_m\ra |=0.
\eneq
\end{exm}

 }
 
\end{rem}

\vspace{0.2in}

hlin@uoregon.edu

\end{document}